\newcommand{\Z}{\mathbb{Z}} \newcommand{\C}{\mathbb{C}}
\newcommand{\R}{\mathbb{R}} \newcommand{\Q}{\mathbb{Q}}
\newcommand{\A}{\mathbb{A}}
\newcommand{\fin}{{\rm f}}
\newcommand{\EE}{\EuScript {E}}
\newcommand{\OO}{\EuScript {O}}
\newcommand{\rd}{{\operatorname{rd}}}
\newcommand{\ird}{{\operatorname{ird}}}
\newcommand{\res}{\operatorname{Res}}
\newcommand{\cs}{\mathcal S}
\newcommand{\esA}{{\EuScript{A}}}
\newcommand{\esB}{{\EuScript{B}}}
\newtheorem{thm}{Theorem}[section]
\newtheorem{lem}[thm]{Lemma}
\newtheorem{prop}[thm]{Proposition}
\newtheorem{cor}[thm]{Corollary}
\theoremstyle{definition}
\newtheorem{defn}[thm]{Definition}
\newtheorem{rem}[thm]{Remark}
\numberwithin{equation}{section}
\begin{document}

\title[binary cubic forms]
{On relations among Dirichlet series\\ whose coefficients are\\
class numbers of binary cubic forms}
\date{\today}

\begin{abstract}
We study the class numbers of integral binary cubic forms.
For each $\mathrm{SL}_2(\Z)$ invariant lattice $L$,
Shintani introduced Dirichlet series whose coefficients
are the class numbers of binary cubic forms in $L$.
We classify the invariant lattices, and investigate
explicit relationships between
Dirichlet series associated with those lattices.
We also study the analytic properties of the Dirichlet series,
and rewrite the functional equation in a self dual form
using the explicit relationship.
\end{abstract}

\author[Yasuo Ohno]{Yasuo Ohno}
\address{(Y. Ohno)
Department of Mathematics, Kinki University,
Kowakae 3-4-1, Higashi-Osaka, Osaka 577-8502, Japan/
Max-Planck-Institut f\"{u}r Mathematik,
Vivatsgasse 7, 53111 Bonn, Germany}
\email{ohno@math.kindai.ac.jp}
\author[Takashi Taniguchi]{Takashi Taniguchi}
\address{(T. Taniguchi)
Department of Mathematical Sciences,
Faculty of Science,
Ehime University,
Bunkyocho 2-5, Matsuyama-shi,
Ehime, 790-8577,
Japan}
\email{tani@math.sci.ehime-u.ac.jp}
\author[Satoshi Wakatsuki]{Satoshi Wakatsuki}
\address{(S. Wakatsuki)
Department of Mathematics, Graduate School of Science,
Kanazawa University, Kakumamachi, Kanazawa, Ishikawa, 920-1192, Japan}
\thanks{
The first author is supported by JSPS Grant-in-Aid No.\!\! 18740020.
The second author is supported by
Research Fellowships for Young Scientists
of JSPS. The third author is supported by JSPS Grant-in-Aid No.\!\! 18840018}
\email{wakatuki@kenroku.kanazawa-u.ac.jp}

\maketitle

\section{Introduction}\label{sec:intro}

Study of the class numbers of integral binary cubic forms
was initiated by G. Eisenstein and developed by many mathematicians
including C. Hermite, F. Arndt, H. Davenport and T. Shintani.
Davenport \cite{davenport} obtained asymptotic formulas
for the sum of the class numbers of
integral irreducible binary cubic forms
of positive and negative discriminants.
Shintani \cite{shintanib} improved the error term
by using the Dirichlet series whose coefficients
are the class numbers of binary cubic forms introduced in \cite{shintania}.

Let $V_\Q$ be the space of binary cubic forms over
the rational number field $\Q$;
\[
V_\Q=\{x(u,v)=x_1u^3+x_2u^2v+x_3uv^2+x_4v^3\mid x_1,\dots,x_4\in\Q\}.
\]
For $x\in V_\Q$, the discriminant $P(x)$ is defined by
$P(x)=x_2^2x_3^2+18x_1x_2x_3x_4-4x_1x_3^3-4x_2^3x_4-27x_1^2x_4^2$.
The group $\Gamma=\mathrm{SL}_2(\Z)$ acts on $V_\Q$ by the linear
change of variables and $P(x)$ is invariant under the action.
Let $L$ be a $\Gamma$-invariant lattice in $V_\Q$.
We put $L_\pm=\{x\in L\mid \pm P(x)>0\}$.
For $x\in L$, let $\Gamma_x$ be the
stabilizer of $x$ in $\Gamma$ and ${}^{\#}\Gamma_x$ its order.
\begin{defn}
For each invariant lattice $L$ and sign $\pm$, we put
\[
\tilde\xi_{\pm}(L,s)
	:=\sum_{x\in\Gamma\backslash L_\pm}
	\frac{({}^{\#}\Gamma_x)^{-1}}{|P(x)|^s}.
\]
\end{defn}
This Dirichlet series was introduced by
Shintani \cite{shintania} as an example of the zeta functions
of prehomogeneous vector spaces. It is shown
that this Dirichlet series has number of curious properties
such as analytic continuation or functional equation.
He treated when the invariant lattice is either
$L_1=\{x\in V_\Q\mid x_1,x_2,x_3,x_4\in\Z\}$ or
$L_2=\{x\in V_\Q\mid x_1,x_4\in\Z, x_2,x_3\in3\Z\}$,
but the proof works for a general invariant lattice
as we confirm in this paper.
Note that $L_1$ and $L_2$ are the dual lattice to each other
with respect to the alternating form
$\langle x,y\rangle=x_1y_4-3^{-1}x_2y_3+3^{-1}x_3y_2-x_4y_1$
on $V_\Q$.

In 1997, the first author \cite{ohno} conjectured
that there are simple relations between $\tilde\xi_\mp(L_1,s)$
and $\tilde\xi_\pm(L_2,s)$. This was proved by Nakagawa \cite{nakagawa}.
\begin{thm}[Conjectured in \cite{ohno}, proved in \cite{nakagawa}]\label{thm:intronakagawa}
\[
\tilde\xi_-(L_1,s)=3^{3s}\tilde\xi_+(L_2,s)
\qquad\text{and}\qquad
\tilde\xi_+(L_1,s)=3^{3s-1}\tilde\xi_-(L_2,s).
\]
\end{thm}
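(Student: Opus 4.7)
The plan is to recast both Dirichlet series as generating series over cubic rings via the Delone--Faddeev correspondence, and then to establish the identities by a Scholz-type reflection argument matching cubic rings of discriminant $D$ with enhanced cubic rings of discriminant $-27D$.

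First I would invoke the Delone--Faddeev correspondence, which gives a natural bijection between $\Gamma$-orbits on $L_1$ with nonzero discriminant and isomorphism classes of oriented cubic rings $R$ over $\Z$, with $P(x)=\mathrm{disc}(R)$ and ${}^{\#}\Gamma_x=|{\rm Aut}(R)|$. This rewrites
\[
\tilde\xi_{\pm}(L_1,s)=\sum_{\pm\mathrm{disc}(R)>0}\frac{|{\rm Aut}(R)|^{-1}}{|\mathrm{disc}(R)|^s}.
\]
For $L_2$, the conditions $3\mid x_2$, $3\mid x_3$ force $27\mid P(x)$ for every $x\in L_2$, so $\tilde\xi_{\pm}(L_2,s)$ is intrinsically $27^{-s}$ times a Dirichlet series in the reduced discriminant $P(x)/27$. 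I would then work through the Delone--Faddeev dictionary restricted to $L_2$ to identify $\Gamma$-orbits there with cubic rings equipped with additional $3$-adic data reflecting the imposed divisibilities; on the arithmetic side, such data is exactly what class field theory attaches to abelian cubic extensions controlled by the prime $3$.

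Next I would set up the matching. The two identities assert that the $L_1$-count at discriminant $\mp D$ equals (respectively, $3$ times) the $L_2$-count at discriminant $\pm 27 D$. A natural candidate is the map sending an oriented cubic ring of discriminant $D$ to its ``$3$-reflection'' cubic ring of discriminant $-27D$, in the spirit of Scholz's reflection theorem relating the $3$-ranks of the ideal class groups of $\Q(\sqrt{D})$ and $\Q(\sqrt{-3D})$. The factor $3^{3s}=27^s$ is the discriminant scaling, while the extra $3^{-1}$ in the second identity should come from the fact that cyclic cubic rings, which all have positive discriminant, contribute a stabilizer of order $3$ on the $L_1$-side.

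The main obstacle, and the heart of the proof, is upgrading Scholz's reflection principle --- a statement about $3$-ranks of class groups of \emph{maximal} orders --- to an identity of weighted counts valid for all cubic orders, with automorphism weights intact and local contributions at every prime (especially $p=3$, where the subtleties of $L_2$ reside) correctly matched. Constructing this explicit ring-theoretic involution, either globally or via an idelic local-to-global argument, is where the substantive work lies.
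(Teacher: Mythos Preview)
The paper does not prove this theorem at all: it is quoted as a known result (conjectured by Ohno \cite{ohno}, proved by Nakagawa \cite{nakagawa}) and then used as a black box in Section~\ref{sec:extrafneq} to derive the analogous relations for $L_7,\dots,L_{10}$. So there is no ``paper's proof'' to compare your attempt against.

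That said, your outline is a fair high-level summary of the strategy Nakagawa actually uses: interpret $\tilde\xi_\pm(L_1,s)$ via the Delone--Faddeev correspondence as a weighted count of cubic rings, identify the $L_2$-side with cubic rings carrying extra $3$-adic data, and relate the two by a Scholz-type reflection. But what you have written is a plan, not a proof, and you yourself name the gap. Upgrading Scholz reflection from a statement about $3$-ranks of class groups of \emph{maximal} orders to an exact identity of automorphism-weighted counts over \emph{all} cubic orders, with the local behaviour at $3$ (where $L_2$ lives) handled precisely, is the entire content of Nakagawa's paper; it runs to several dozen pages and requires a conductor decomposition of cubic rings together with a prime-by-prime verification that the local densities on the two sides match. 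Your sketch gives no indication of how any of this would be carried out, so as a proof it is incomplete at exactly the point where the real work begins.
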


The primary purpose of this paper
is to classify the $\Gamma$-invariant lattices
and investigate whether
there are similar formulas for those lattices.
In Section \ref{sec:extrafneq} we prove the following.

\begin{thm}[Theorem \ref{thm:extrafneqL7L10}]\label{thm:intro}
There are $10$ kinds of $\Gamma$-invariant lattices up to scaling.
If we denote these lattices by $L_1,\dots,L_{10}$
as in Theorem \ref{thm:classifyinvlat}, then for
Dirichlet series associated with $L_7,\dots,L_{10}$ we have
\begin{align*}
\tilde\xi_-(L_7,s)&=3^{3s}\tilde\xi_+(L_8,s),
&
\tilde\xi_+(L_7,s)&=3^{3s-1}\tilde\xi_-(L_8,s),\\
\tilde\xi_-(L_9,s)&=3^{3s}\tilde\xi_+(L_{10},s),
&
\tilde\xi_+(L_9,s)&=3^{3s-1}\tilde\xi_-(L_{10},s).
\end{align*}
On the other hand, the
Dirichlet series associated with $L_3,\dots,L_6$
do not satisfy such simple relations as above.
For example, $\tilde\xi_-(L_3,s)$ and $3^{3s}\tilde\xi_+(L_4,s)$
do not coincide with each other.
\end{thm}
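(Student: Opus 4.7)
We take the classification of the ten $\Gamma$-invariant lattices $L_1,\ldots,L_{10}$ as given by Theorem \ref{thm:classifyinvlat}, together with the explicit descriptions of each $L_i$ via congruence conditions on the coefficients $x_1,x_2,x_3,x_4$. The task then splits into two essentially independent parts: proving the four identities for $L_7,\ldots,L_{10}$, and exhibiting an explicit counterexample for $L_3,\ldots,L_6$.

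For the positive part, the guiding principle is that the Ohno--Nakagawa identity in Theorem \ref{thm:intronakagawa} depends on the pair $(L_1,L_2)$ only through its structure at the prime $3$. We expect the classification to realise $L_7,L_8$ (and likewise $L_9,L_{10}$) as a dual pair under $\langle\,,\,\rangle$ obtained from $(L_1,L_2)$ by imposing extra congruence conditions at primes $p\neq 3$ together with an appropriate overall rescaling. Granting this, the plan is to decompose each of $\tilde\xi_\pm(L_1,s)$ and $\tilde\xi_\pm(L_2,s)$ as a finite sum
\[
\tilde\xi_\pm(L_j,s)=\sum_{c}\tilde\xi_\pm(L_j;c,s)
\]
indexed by the residue classes $c$ modulo the auxiliary conductor away from $3$, then apply Nakagawa's identity class by class (its proof, being essentially a statement at $3$ via Scholz reflection and Shintani's zeta function, is insensitive to conditions at other primes), and finally reassemble only those classes that carve out $L_7,L_8$ or $L_9,L_{10}$. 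This yields the four displayed equalities.

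For the negative part it suffices to exhibit one small discriminant at which the candidate relation fails. To disprove $\tilde\xi_-(L_3,s)=3^{3s}\tilde\xi_+(L_4,s)$, we enumerate the $\Gamma$-orbits of integral binary cubic forms in $L_3\cap (L_3)_-$ for each of the few smallest admissible values of $|P|$, compute the weighted orbit count $\sum ({}^{\#}\Gamma_x)^{-1}$, and compare it with the analogous weighted count for $L_4\cap (L_4)_+$ at the value $27|P|$. Since such forms can be brought into the usual $\mathrm{SL}_2(\Z)$-fundamental domain and enumerated as in \cite{davenport}, this is a finite explicit computation, and a single mismatch completes the argument.

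The main obstacle lies in the positive part: one must verify that $L_7,L_8$ (respectively $L_9,L_{10}$) share the $3$-adic structure of $(L_1,L_2)$, so that Nakagawa's proof applies on each residue class with only the trivial modification of carrying an inert congruence condition at primes $p\neq 3$. If the classification in Theorem \ref{thm:classifyinvlat} is arranged so that this compatibility is visible from the defining congruences, the verification reduces to bookkeeping; otherwise, an intermediate lemma recasting each $L_i$ in this form will be needed. The negative part, by contrast, is a bounded finite computation with no conceptual difficulty, and the whole proof is completed by assembling these two ingredients.
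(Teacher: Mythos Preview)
Your overall strategy---reduce to the Ohno--Nakagawa identity for $(L_1,L_2)$---matches the paper's, and your plan for the negative part (tabulate small coefficients and exhibit a mismatch) is exactly what the paper does.

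For the positive part, however, you are missing the one idea that makes the reduction clean. You propose to slice $\tilde\xi_\pm(L_1,s)$ and $\tilde\xi_\pm(L_2,s)$ by residue classes of the coefficients ``away from $3$'' and then argue that Nakagawa's proof survives on each class because it is ``insensitive to conditions at other primes''. Verifying that claim would mean re-entering the cubic-field/Scholz-reflection machinery of \cite{nakagawa}, which is substantial, and your sketch does not make precise what the classes $c$ are nor why the $L_1$-classes and $L_2$-classes match up. The paper bypasses all of this by proving (Propositions~\ref{pr:L7L9} and~\ref{pr:L8L10}) that the $2$-adic conditions on the \emph{coefficients} defining $L_7,\ldots,L_{10}$ are equivalent to congruence conditions on the \emph{discriminant} modulo~$8$:
\[
L_7=2L_1\amalg\{x\in L_1:P(x)\equiv 1\ (8)\},\qquad
L_8=2L_2\amalg\{x\in L_2:P(x)/27\equiv 7\ (8)\},
\]
and similarly for $(L_9,L_{10})$ with residues $5$ and $3$. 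This is an elementary but not entirely obvious computation, using $P(x)\equiv(bc+ad)^2-4(ac^3+b^3d-a^2d^2)\pmod{16}$. Once you have it, Theorem~\ref{thm:intronakagawa} is a \emph{coefficient-by-coefficient} equality of Dirichlet series indexed by $|P|$, so it restricts automatically to any arithmetic progression of $|P|$ modulo~$8$; adding back the contribution of $2L_1$ (resp.\ $2L_2$), which is $2^{-4s}$ times the original series, gives the identities for $L_7,L_8,L_9,L_{10}$ with Nakagawa's theorem used purely as a black box.

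In short, the obstacle you flagged---checking the $3$-adic compatibility of $(L_7,L_8)$ with $(L_1,L_2)$---is trivial (the extra conditions are purely $2$-adic), whereas the step you did not anticipate---turning coefficient congruences mod~$2$ into discriminant congruences mod~$8$---is the actual content and is precisely what lets one avoid opening up Nakagawa's proof.
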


These relations of the Dirichlet series
are proved in Theorem \ref{thm:extrafneqL7L10} using
Theorem \ref{thm:intronakagawa}.
(In Section \ref{sec:extrafneq}
we slightly modify the definition of the Dirichlet series.)
It is likely that the relations
among the Dirichlet series for $L_3,\dots,L_6$
are somewhat more complicated.
If we take the arithmetic subgroup
$\Gamma$ smaller, there appears more invariant lattices
and it may be an interesting problem to study Dirichlet series
associated with those lattices.
We hope these problems to be answered in the future.

Such a relation of the Dirichlet
is expected to exist also for some other representations.
Among them for the space of pairs of ternary quadratic forms
$(G,V)=({\rm GL}_3\times{\rm GL}_2,
({\rm Sym^2Aff^3})^\ast\otimes {\rm Aff^2})$,
this problem is considerably interesting and being studied
by several mathematicians including Bhargava and Nakagawa.
We note that there are only $2$ types
of $G_\Z$-invariant lattices for this case.

We explain a curious application of
this theorem to the functional equation
for $\tilde\xi_{\pm}(L_i,s)$.
Let $a_1=a_2=0$ and $a_3=\dots=a_{10}=2$.
Following Datskovsky and Wright \cite{dawra}
we put
\begin{align*}
\Lambda_\pm(L_i,s)
&	:=\frac{2^{(a_i+1)s}3^{3s/2}}{\pi^{2s}}
		\Gamma(s)\Gamma(\frac{s}{2}+\frac14\mp\frac13)
		\Gamma(\frac{s}{2}+\frac14\mp\frac16)
	\left(\sqrt3\tilde\xi_+(L_i,s)\pm\tilde\xi_-(L_i,s)\right)
\end{align*}
for each sign.
Then Shintani's functional equation
between the vector valued functions
$(\tilde\xi_+(L_i,1-s),\tilde\xi_-(L_i,1-s))$
and $(\tilde\xi_+(L_{i+1},s),\tilde\xi_-(L_{i+1},s))$
$(i=1,3,5,7,9)$ 
is diagonalized and symmetrized as
\begin{align*}
\Lambda_\pm(L_{i},1-s)&=\pm2^{a_i-b_i}3^{3s-1/2}\Lambda_\pm(L_{i+1},s)
\end{align*}
where $b_1=0$, $b_3=1$, $b_5=3$ and $b_7=b_9=2$.
Let $i$ be either $1$, $7$ or $9$.
Then Theorems \ref{thm:intronakagawa} and \ref{thm:intro} state that
$\Lambda_\pm(L_{i+1},s)=\pm3^{1/2-3s}\Lambda_\pm(L_{i},s)$.
Since $a_i=b_i$ holds also,
we can write the functional equations above as follows.
\begin{thm}[Theorem \ref{cor:singlefneq}]
Let $i$ be either $1$, $7$ or $9$. Then
\begin{equation*}
\Lambda_\pm(L_{i},1-s)=\Lambda_\pm(L_{i},s).
\end{equation*}
A similar formula holds for $i=2$, $8$ or $10$.
\end{thm}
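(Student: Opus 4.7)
The plan is to combine the two functional equations displayed immediately above the theorem statement and observe that all scaling factors cancel exactly. I focus on $i\in\{1,7,9\}$; the case $i\in\{2,8,10\}$ will be handled by the symmetric argument.

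First, invoke the diagonalized and symmetrized form of Shintani's functional equation recorded in the excerpt:
\[
\Lambda_\pm(L_i,1-s) \;=\; \pm\,2^{a_i-b_i}\,3^{3s-1/2}\,\Lambda_\pm(L_{i+1},s).
\]
For $i=1,7,9$ the pairs $(a_i,b_i)$ are $(0,0)$, $(2,2)$, $(2,2)$ respectively, so $a_i=b_i$ in each case and the factor $2^{a_i-b_i}$ collapses to $1$.

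Next, substitute the identity
\[
\Lambda_\pm(L_{i+1},s) \;=\; \pm\,3^{1/2-3s}\,\Lambda_\pm(L_i,s),
\]
which repackages Theorem \ref{thm:intronakagawa} when $i=1$ and Theorem \ref{thm:intro} when $i=7,9$, using also that $a_{i+1}=a_i$ in these cases so that the archimedean/gamma-factor prefactors in $\Lambda_\pm(L_{i+1},s)$ and $\Lambda_\pm(L_i,s)$ agree. The product of the two $\pm$ signs is $+1$ irrespective of the choice of sign, and $3^{3s-1/2}\cdot 3^{1/2-3s}=1$, so the substitution immediately yields
\[
\Lambda_\pm(L_i,1-s) \;=\; \Lambda_\pm(L_i,s).
\]

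There is essentially no technical obstacle; the proof is a one-line cancellation once the two input equations are in hand. The conceptual content lies in the design of the normalization $\Lambda_\pm(L_i,s)$: the Datskovsky--Wright-type prefactor together with the combination $\sqrt{3}\,\tilde\xi_+\pm\tilde\xi_-$ is chosen precisely so that Shintani's gamma-factor matrix becomes diagonal with diagonal entries $\pm\,3^{3s-1/2}$, and so that these entries are exactly inverse, under $s\mapsto 1-s$, to the class-number-relation factors $\pm\,3^{1/2-3s}$ coming from the Ohno--Nakagawa theorem. Verifying these two input equations is the real work of the paper; once they are in place, the self-dual functional equation drops out by the cancellation above. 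The case $i\in\{2,8,10\}$ is treated identically after exchanging the roles of $L_i$ and $L_{i+1}$ in both input equations.
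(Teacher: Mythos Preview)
Your proof is correct and follows essentially the same approach as the paper: combine the diagonalized functional equation $\Lambda_\pm(L_i,1-s)=\pm 2^{a_i-b_i}3^{3s-1/2}\Lambda_\pm(L_{i+1},s)$ with the Ohno--Nakagawa-type relation $\Lambda_\pm(L_{i+1},s)=\pm 3^{1/2-3s}\Lambda_\pm(L_i,s)$, observe $a_i=b_i$ for $i=1,7,9$, and let the signs and powers of $3$ cancel. The paper does exactly this (phrased in the normalization of Definition~\ref{defn:lambda}, where the two input equations read $\Lambda_\pm(L_i,1-s)=\pm 3^{-1/2}2^{a_i-b_i}\Lambda_\pm(L_{i+1},s)$ and $\Lambda_\pm(L_{i+1},s)=\pm\sqrt{3}\,\Lambda_\pm(L_i,s)$), so there is no substantive difference.
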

The case $i=1,2$ is stated in \cite[p.1088]{ohno}.
Unlike Shintani's original one,
this functional equation is of the single Dirichlet series
$\sqrt3\tilde\xi_+(L_i,s)\pm\tilde\xi_-(L_i,s)$
and also the equation is completely symmetric.
We hope this equation
might help us to know something on the real nature
of the Dirichlet series.
Note that the Dirichlet series
$\sqrt3\tilde\xi_+(L_i,s)\pm\tilde\xi_-(L_i,s)$
does not have an Euler product for any $L_i$
(see Proposition \ref{prop:EulerProduct}.)
\bigskip

This paper is organized as follows.
In Section \ref{sec:lattices}, we give the classification
of the invariant lattices without a proof.
The proof is given in Section \ref{sec:prooflattice}.
In Section \ref{sec:extrafneq}, we study
the explicit relationship of the Dirichlet series.
In Section \ref{sec:residue}
we study the analytic properties of the Dirichlet series.
In Theorem \ref{thm:analyticprop} we give
functional equations explicitly and evaluate the residues of the poles.
After that we study on the diagonalization of the functional equation
and give a simple symmetric functional equation using the result of Section \ref{sec:extrafneq}.
We also give in Theorem \ref{thm:density}
the density of the class numbers of the lattices.
In Section \ref{sec:coefftable}, we give a table
of about first fifty coefficients of the Dirichlet series.
\bigskip

\noindent
{\bf Acknowledgments.}
Dr.\!\! Noriyuki Abe wrote a good C++ program to compute
the coefficients of the Dirichlet series.
The table of coefficients played an important role
in studying the Dirichlet series.
The authors express their deep gratitude to him.
The authors are also grateful to Professor Tomoyoshi Ibukiyama
for useful comments, especially on applications of our results
to the functional equations.

\bigskip

\noindent
{\bf Notations.}
The standard symbols $\Q$, $\R$, $\C$ and $\Z$ will denote respectively
the set of rational, real and complex numbers and the rational integers.
If $V$ is a variety defined over a ring $R$ and $S$ is an $R$-algebra
then $V_S$ denotes its $S$-rational points.
The $1$-dimensional affine space is denoted by $\mathrm{Aff}$.

\section{Classification of invariant lattices}
\label{sec:lattices}
Let $G$ be the general linear group of rank $2$
and $V$ the space of binary cubic forms;
\begin{align*}
G&=\mathrm{GL}_2,\\
V&=
\{x=x(v_1,v_2)=x_1v_1^3+x_2v_1^2v_2+x_3v_1v_2^2+x_4v_2^3\mid x_i\in\mathrm{Aff}\}.
\end{align*}
We identify $V$ with $\mathrm{Aff}^4$ via the map
$x\mapsto (x_1,x_2,x_3,x_4)$.
We define the action of $G$ on $V$ by
\[
(gx)(v_1,v_2)=\frac{1}{\det(g)}\cdot x(pv_1+rv_2,qv_1+sv_2),
\qquad
g=\begin{pmatrix}p&q\\r&s\end{pmatrix}\in G,
\ \ 
x\in V.
\]
The twist by $\det(g)^{-1}$ is to make the representation faithful.
For $x\in V$, let $P(x)$ be the discriminant;
\begin{equation*}
P(x)=x_2^2x_3^2-4x_1x_3^3-4x_2^3x_4+18x_1x_2x_3x_4-27x_1^2x_4^2.
\end{equation*}
Then we have $P(gx)=(\det g)^2P(x)$.
We put $G^1=\mathrm{SL}_2$.
We assume these are defined over $\Z$.

Let $\Gamma\subset G_\Q$ be an arithmetic subgroup.
The zeta functions of the prehomogeneous vector space $(G,V)$ over $\Q$
are defined for each $\Gamma$-invariant lattice in $V_\Q$.
In this paper we consider the case $\Gamma=G^1_\Z=\mathrm{SL}_2(\Z)$.
To begin we need the classification of the invariant lattices.
For a lattice $L$ in $V_\Q$ and $q\in\Q^\times$,
we put $qL=\{qx\mid x\in L\}$. Then if $L$ is a $\Gamma$-invariant
lattice, $qL$ is $\Gamma$-invariant also.
Up to such a scaling,
$G^1_\Z$-invariant lattices are classified as follows.

\begin{thm}\label{thm:classifyinvlat}
Up to scaling, the following is a complete list 
of $\mathrm{SL}_2(\Z)$-invariant lattices in $V_\Q$:
\begin{eqnarray*}
L_1&=&\{(a,b,c,d)\in \Z^4 \}\\
L_2&=&\{(a,3b,3c,d)\in \Z^4 \ |\  b,c\in \Z \}\\
L_3&=&\{(a,b,c,d)\in L_1 \ |\  b+c\in 2\Z \}\\
L_4&=&\{(a,3b,3c,d)\in L_2 \ |\  a,d,b+c\in 2\Z \}\\
L_5&=&\{(a,b,c,d)\in L_1 \ |\  a,d,b+c\in 2\Z \}\\
L_6&=&\{(a,3b,3c,d)\in L_2 \ |\  b+c\in 2\Z \}\\
L_7&=&\{(a,b,c,d)\in L_1 \ |\  a+b+c,b+c+d \in 2\Z \}\\
L_8&=&\{(a,3b,3c,d)\in L_2 \ |\  a+b+d,a+c+d \in 2\Z \}\\
L_9&=&\{(a,b,c,d)\in L_1 \ |\  a+b+d,a+c+d \in 2\Z \}\\
L_{10}&=&\{(a,3b,3c,d)\in L_2 \ |\  a+b+c,b+c+d \in 2\Z \}
\end{eqnarray*}
\end{thm}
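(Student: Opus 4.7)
The plan is to reduce the classification to a finite problem of enumerating $\Gamma$-stable subspaces of $L_1/6L_1$. Since any two full-rank $\Z$-lattices in $V_\Q$ are commensurable, every $\Gamma$-invariant lattice admits, after scaling by a suitable $q \in \Q^\times$, a unique \emph{primitive} representative $L \subset L_1$ characterized by $L \not\subset pL_1$ for all primes $p$. The key structural claim is that $6L_1 \subset L$ for every such $L$. For primes $p \geq 5$, the $\mathrm{SL}_2(\F_p)$-module $L_1/pL_1 \cong \mathrm{Sym}^3(\F_p^2)$ is irreducible (highest weight $3 < p$), so by a Nakayama-type argument the localization $L \otimes \Z_p$ equals $L_1 \otimes \Z_p$. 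For $p = 2$ and $p = 3$ the representation is indecomposable but reducible; by analyzing $L_1/p^2 L_1$ as a $\Gamma$-module one verifies that the $p$-part of $L_1/L$ is annihilated by $p$, so $p(L_1)_p \subset L_p$. Hence $6L_1 \subset L \subset L_1$, and the Chinese Remainder Theorem gives $L/6L_1 = H_2 \oplus H_3$ with $H_p$ a $\Gamma$-stable subspace of $\F_p^4 = L_1/pL_1$; primitivity is equivalent to $H_2 \neq 0$ and $H_3 \neq 0$.

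It remains to enumerate nonzero $\Gamma$-stable subspaces of $\F_p^4$ for $p = 2, 3$. For $p = 3$, the Frobenius image $W_2 = \{x_2 = x_3 = 0\}$ (the image of $v \mapsto v^3 \colon \F_3^2 \to \mathrm{Sym}^3 \F_3^2$) is a $2$-dimensional invariant subspace, and explicit computation of $T, S$ modulo $3$ shows that $W_2$ and $\F_3^4/W_2$ are both irreducible and that the extension between them is non-split, giving exactly two nonzero invariant subspaces. For $p = 2$, the action factors through $\mathrm{SL}_2(\F_2) \cong S_3$, and a careful case analysis reveals five nonzero invariant subspaces of dimensions $1, 2, 2, 3, 4$, whose defining equations coincide with the mod-$2$ conditions appearing in the pairs $\{L_4, L_5\}$, $\{L_7, L_{10}\}$, $\{L_8, L_9\}$, $\{L_3, L_6\}$, and $\{L_1, L_2\}$ respectively. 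Pairing each of the five $H_2$'s with each of the two $H_3$'s yields $5 \times 2 = 10$ primitive invariant sublattices, matching the list $L_1, \ldots, L_{10}$.

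The main obstacle is the $p = 2$ enumeration. The natural evaluation map $\F_2^4 = L_1/2L_1 \to \F_2^3$ sending a cubic form to its values at the three nonzero points of $\F_2^2$ accounts, via pullback from the invariant subspaces of $\F_2^3$ as a permutation representation of $S_3$, for only four of the five invariant subspaces; the missing fifth subspace is a $2$-dimensional ``section'' of the evaluation map that projects isomorphically onto the augmentation ideal $\{\sum y_i = 0\} \subset \F_2^3$, and it must be identified directly from the $T$- and $S$-action (for instance by checking that the $\Gamma$-orbit of $(1, 1, 1, 0)$ spans a $2$-dimensional invariant subspace distinct from the pullback ones). A secondary subtlety lies in the level argument for $p = 2, 3$: one must rule out primitive invariant sublattices $L$ with $p^2(L_1)_p \subset L_p$ but $p(L_1)_p \not\subset L_p$, which requires an explicit analysis of the $\Gamma$-module structure of $L_1/p^2 L_1$.
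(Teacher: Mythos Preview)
Your approach is valid but genuinely different from the paper's. Both normalize to a primitive $L\subset L_1$ and localize, but the paper (Section~\ref{sec:prooflattice}) then argues by explicit element manipulation: from a single primitive $x$ it repeatedly applies $u(\alpha)$, $w$, and the difference operator $\psi(y)=u(1)y-y$ to manufacture enough elements of $(L)_p$ to identify it directly---no appeal to irreducibility of $\mathrm{Sym}^3$, no module analysis of $L_1/p^2L_1$. In particular the level bound at $p=2,3$ falls out of this same element-chasing (Lemma~\ref{lem:l3} shows $(L)_2\supset(L_5)_2$ or $(L_9)_2$, hence $\supset 2(L_1)_2$), and the final enumeration is a short coset check (Lemma~\ref{lem:l4}). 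Your representation-theoretic route is conceptually tidier, and the irreducibility-plus-Nakayama step for $p\geq5$ is more elegant than the paper's ad hoc computation; the price is that the two ``subtleties'' you defer carry most of the remaining work. One point worth making explicit: even after you verify that every primitive $\Gamma$-submodule of $L_1/p^2L_1$ contains $pL_1/p^2L_1$, this only gives $p(L_1)_p\subset L_p+p^2(L_1)_p$, and a short iteration (equivalently, $p$-adic closure of $L_p$ in $(L_1)_p$) is still needed to deduce $p(L_1)_p\subset L_p$. Your evaluation-map picture at $p=2$ is correct: the four pullback subspaces are $L_j/2L_1$ for $j=5,7,3,1$, the missing section over the augmentation ideal is $L_9/2L_1$, and its uniqueness as a $\Gamma$-equivariant section follows from $\mathrm{Hom}_{S_3}(\mathbf{St},\mathbf{1})=0$ over $\F_2$.
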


We give a proof of this theorem in Section \ref{sec:prooflattice}.
Each of $L_3,L_5,L_7,L_9$ is a sublattice of $L_1$ and is containing $2L_1$.
The relations of inclusions and their indices are given by
\begin{gather*}
[L_1:L_3]=[L_3:L_9]=[L_7:L_5]=[L_5:2L_1]=2,\\
[L_1:L_7]=[L_3:L_5]=[L_9:2L_1]=4.
\end{gather*}
\begin{center}
\unitlength 0.1in
\begin{picture}( 20.2500, 16.8500)(  3.7500,-22.0000)
%
\special{pn 8}%
\special{pa 1600 600}%
\special{pa 800 1400}%
\special{pa 1600 2200}%
\special{pa 2400 1400}%
\special{pa 1600 600}%
\special{pa 1600 600}%
\special{pa 1600 600}%
\special{fp}%
%
\special{pn 8}%
\special{pa 2000 1800}%
\special{pa 1200 1000}%
\special{pa 1200 1000}%
\special{pa 1200 1000}%
\special{fp}%
\put(14.0000,-6.0000){\makebox(0,0){$L_1$}}%
%
\special{pn 8}%
\special{sh 1}%
\special{ar 1600 600 10 10 0  6.28318530717959E+0000}%
\special{sh 1}%
\special{ar 1200 1000 10 10 0  6.28318530717959E+0000}%
\special{sh 1}%
\special{ar 1200 1000 10 10 0  6.28318530717959E+0000}%
%
\special{pn 8}%
\special{sh 1}%
\special{ar 800 1400 10 10 0  6.28318530717959E+0000}%
\special{sh 1}%
\special{ar 1600 2200 10 10 0  6.28318530717959E+0000}%
\special{sh 1}%
\special{ar 2000 1800 10 10 0  6.28318530717959E+0000}%
\special{sh 1}%
\special{ar 2400 1400 10 10 0  6.28318530717959E+0000}%
\special{sh 1}%
\special{ar 2400 1400 10 10 0  6.28318530717959E+0000}%
\put(10.0000,-10.0000){\makebox(0,0){$L_3$}}%
\put(6.0000,-14.0000){\makebox(0,0){$L_9$}}%
\put(26.0000,-14.0000){\makebox(0,0){$L_7$}}%
\put(22.0000,-18.0000){\makebox(0,0){$L_5$}}%
\put(18.0000,-22.0000){\makebox(0,0){$2L_1$}}%
\end{picture}%
\end{center}
There are similar relations for $L_2,\dots,L_{10}$.

We define the alternating form on $V_\Q$ by
$\langle x,y\rangle=x_1y_4-3^{-1}x_2y_3+3^{-1}x_3y_2-x_4y_1$.
Then $L_i$ and $2^{-1}L_{i+1}$ are the dual lattices to each other
for $i=3,5,7,9$.

\begin{rem}
We immediately see that
all of the lattices in Theorem \ref{thm:classifyinvlat}
are invariant under the action of
$(\begin{smallmatrix}0&1\\1&0\\\end{smallmatrix})\in G_\Z$.
Since the group $G_\Z=\mathrm{GL}_2(\Z)$ is generated by
$(\begin{smallmatrix}0&1\\1&0\\\end{smallmatrix})$
and $G^1_\Z$,
Theorem \ref{thm:classifyinvlat}
also gives the list of $\mathrm{GL}_2(\Z)$-invariant lattices.
\end{rem}

\section{Relations of the Dirichlet series}
\label{sec:extrafneq}
In this section, we define the Dirichlet series
for each lattice and study their relations.
Let $L_i^+=\{x\in L_i\mid P(x)>0\}$ and $L_i^-=\{x\in L_i\mid P(x)<0\}$.
For $x\in L_i$, we put
$G^1_{\Z,x}=\{\gamma\in{\rm SL}_2(\Z)\mid \gamma x=x\}$
and denote by ${}^{\#} G^1_{\Z,x}$ its order.
We note that ${}^{\#} G^1_{\Z,x}$ is either $1$ or $3$.
\begin{defn}\label{defn:Dirichlet}
\begin{enumerate}
\item
For $i=1,3,5,7,9$, we put
\[
\xi_\pm(L_i,s)=\sum_{x\in{G^1_\Z}\backslash L_i^\pm}
\frac{({}^{\#} G^1_{\Z,x})^{-1}}{|P(x)|^s}.
\]
\item
For $i=2,4,6,8,10$, we put
\[
\xi_\pm(L_i,s)=3^{3s}\sum_{x\in{G^1_\Z}\backslash L_i^\pm}
\frac{({}^{\#} G^1_{\Z,x})^{-1}}{|P(x)|^s}.
\]
\end{enumerate}
\end{defn}
These Dirichlet series were introduced by Shintani \cite{shintania}
as an example of the zeta functions of prehomogeneous vector spaces.
This definition in (2) differs from that in \cite{shintania}
by the factor of $3^{3s}$.
Note that if $x\in L_2$ then $P(x)$ is a multiple of $3^3$.
It is known that these Dirichlet series converges for $\Re(s)>1$.
The analytic properties are studied in Section \ref{sec:residue}.

In \cite{ohno}, the first author gave the following conjecture,
and proved that if the conjecture is true then
the Shintani's functional equation
has a simple symmetric form.
This conjecture was proved by Nakagawa \cite{nakagawa}.
\begin{thm}[Nakagawa]\label{thm:extrafneqL1L2}
\[
\xi_-(L_1,s)=\xi_+(L_2,s),
\qquad
3\xi_+(L_1,s)=\xi_-(L_2,s).
\]
\end{thm}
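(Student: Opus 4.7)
The plan is to prove the two identities coefficient-by-coefficient. Writing the Dirichlet series as $\xi_\pm(L_i,s)=\sum_{n\geq 1}a_i^\pm(n)\,n^{-s}$, the factor $3^{3s}$ in the definition of $\xi_\pm(L_2,s)$, together with the fact that $P(x)\in 27\Z$ for $x\in L_2$, makes $a_2^\pm(n)$ the weighted count of $\mathrm{SL}_2(\Z)$-orbits on $L_2^\pm$ with $|P(x)|=27n$. The two claimed identities thus amount to $a_1^-(n)=a_2^+(n)$ and $3\,a_1^+(n)=a_2^-(n)$ for every $n\geq 1$, each weighted by $1/{}^{\#}G^1_{\Z,x}$.

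To interpret the $L_1$-side coefficients, I would invoke the Delone--Faddeev correspondence, which gives a discriminant-preserving bijection between $\mathrm{GL}_2(\Z)$-orbits on $L_1=\Z^4$ and isomorphism classes of cubic rings (rank-$3$ commutative $\Z$-algebras). Passing from $\mathrm{GL}_2(\Z)$ to $\mathrm{SL}_2(\Z)$ at most doubles orbits, and the stabilizer ${}^{\#}G^1_{\Z,x}\in\{1,3\}$ precisely detects whether the associated cubic ring is Galois. Thus $a_1^\pm(n)$ becomes a sum over cubic rings $R$ with $\mathrm{disc}(R)=\pm n$, weighted by the reciprocal of the appropriate automorphism count.

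The analogous interpretation for $L_2$ is more delicate. Since $L_2$ is the dual of $L_1$ under the alternating form $\langle\cdot,\cdot\rangle$ given in the introduction, $\mathrm{SL}_2(\Z)$-orbits on $L_2^\pm$ with $|P(x)|=27n$ should correspond to pairs $(R',\chi)$, where $R'$ is a cubic ring related to $R$ by a change of sign of the discriminant, and $\chi$ is a $3$-torsion datum attached to $R'$ (concretely, a class in the $3$-torsion of the ideal class group of the resolvent quadratic order of $R'$, or equivalently an unramified cyclic cubic extension of the resolvent quadratic field). The condition $x_2,x_3\in 3\Z$ defining $L_2$ translates, via this dictionary, into the condition that a certain resolvent is divisible by $3$.

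With this dictionary in hand, both identities become Scholz-type reflection statements: a weighted count of cubic rings of one sign of discriminant is matched, via the quadratic resolvent, with a weighted count of $3$-torsion classes (i.e.\ unramified cyclic cubic extensions) on the opposite sign side. The extra factor of $3$ in the second identity reflects the usual asymmetry in Scholz's theorem, where the $3$-ranks of class groups of a real/imaginary pair of quadratic fields related by the $\pm 3$-reflection differ by at most one. The hardest step will be setting up the $L_2$-to-cubic-ring-plus-$3$-torsion dictionary precisely, with careful local analysis at the prime $3$, and tracking the automorphism weights ${}^{\#}G^1_{\Z,x}$ through the bijection; once this is done, the identities follow from a counting argument using class field theory that establishes the explicit reflection bijection on both sides.
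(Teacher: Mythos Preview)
The paper does not prove this statement; it is quoted as an external result, conjectured in \cite{ohno} and proved by Nakagawa in \cite{nakagawa}, and then used as a black box in the proof of Theorem~\ref{thm:extrafneqL7L10}. So there is no ``paper's own proof'' to compare your proposal against.

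As for your sketch considered on its own terms, the overall direction is the right one and is close in spirit to Nakagawa's argument: he too works coefficient-by-coefficient, uses the Delone--Faddeev correspondence, and the theorem really is an extension of the Scholz reflection principle from maximal orders to all quadratic orders. Two points deserve correction, however. First, your interpretation of the $L_2$-side is off: $L_2$ is itself an $\mathrm{SL}_2(\Z)$-invariant lattice of integral binary cubic forms, so its orbits again parametrize cubic rings via Delone--Faddeev (with a specific $3$-adic condition coming from $x_2,x_3\in 3\Z$), not ``pairs $(R',\chi)$'' with an auxiliary $3$-torsion datum. The $3$-torsion in class groups enters on \emph{both} sides, as the bridge between counts of cubic orders of a given discriminant and invariants of the resolvent quadratic order. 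Second, what you identify as the ``hardest step'' is essentially the entire content of Nakagawa's paper: one needs an order-by-order version of Scholz (not merely the field version), together with nontrivial local analysis at $2$ and $3$ to match the two counts exactly, including the Galois/stabilizer weights. Your outline names this step but gives no indication of how to carry it out, so as written it is a plan rather than a proof.
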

In this section, we prove the following analogous relations.
The simplification and symmetrization of Shintani's functional equation
in terms of this theorem is given in Theorem \ref{cor:singlefneq}.
\begin{thm}\label{thm:extrafneqL7L10}
\begin{align*}
\xi_-(L_7,s)&=\xi_+(L_8,s),\\
\xi_-(L_9,s)&=\xi_+(L_{10},s),\\
3\xi_+(L_7,s)&=\xi_-(L_8,s),\\
3\xi_+(L_9,s)&=\xi_-(L_{10},s).
\end{align*}
\end{thm}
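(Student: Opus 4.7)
The plan is to reduce Theorem~\ref{thm:extrafneqL7L10} to Nakagawa's theorem (Theorem~\ref{thm:extrafneqL1L2}) via inclusion--exclusion on the mod-$2$ structure of $L_1$ and $L_2$. Each of $L_7, L_9 \subset L_1$ has index $4$ in $L_1$ and contains $2L_1$, so its indicator function can be written as an average of characters of $L_1/2L_1 \cong \F_2^4$; for instance
\[
\mathbf{1}_{L_7}(x) = \tfrac14\bigl(1 + (-1)^{a+b+c} + (-1)^{b+c+d} + (-1)^{a+d}\bigr),
\]
and similarly for $\mathbf{1}_{L_9}$, and for $\mathbf{1}_{L_8}, \mathbf{1}_{L_{10}}$ on the $L_2$-side. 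Substituting into the definition of $\xi_\pm(L_j, s)$ rewrites each of the four series as $\tfrac14$ times a sum of four character-twisted Dirichlet series of $L_1$ (resp.\ $L_2$). The untwisted piece contributes $\xi_\pm(L_1, s)$ (resp.\ $\xi_\pm(L_2, s)$), to which Theorem~\ref{thm:extrafneqL1L2} applies directly.

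The nontrivial twists are then organized by $\Gamma$-orbit. Since the $\mathrm{SL}_2(\Z)$-action on $\F_2^4$ factors through $\mathrm{SL}_2(\F_2) \cong S_3$, a direct enumeration yields six orbits, naturally labelled by the factorization type over $\F_2$ of the associated binary cubic form: zero; the totally split cubic $v_1 v_2(v_1{+}v_2)$; the two irreducible cubics; cubics of type $L^2M$; cubics of type $L^3$; and cubics of type $L\cdot Q$, where $Q$ is the unique irreducible quadratic over $\F_2$. Because $3 \equiv 1 \pmod 2$, the basis identification $L_2/2L_2 \cong L_1/2L_1$ is $\Gamma$-equivariant, so the same decomposition transfers to the $L_2$-side; the alternating-form duality matches the characters in $\mathbf{1}_{L_7}$ with the elements of $L_8/2L_2$, and those in $\mathbf{1}_{L_9}$ with $L_{10}/2L_2$. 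Writing $\xi^O_\pm(L, s)$ for the partial Dirichlet series summed only over those $\Gamma$-orbits of elements whose reduction mod $2L$ lies in the $\Gamma$-orbit $O$, the relation $\xi_-(L_7, s) = \xi_+(L_8, s)$ reduces---after handling the zero orbit via Theorem~\ref{thm:extrafneqL1L2} together with $\xi_\pm(2L, s) = 16^{-s}\xi_\pm(L, s)$---to a combined orbital identity of the shape
\[
\xi^{\mathrm{spl}}_-(L_1, s) + \xi^{\mathrm{irr}}_-(L_1, s) = \xi^{\mathrm{LQ}}_+(L_2, s),
\]
and the other three relations of Theorem~\ref{thm:extrafneqL7L10} become analogous sum identities, with the roles $L_1 \leftrightarrow L_2$ and $\{\mathrm{spl},\mathrm{irr}\} \leftrightarrow \{\mathrm{LQ}\}$ swapped and with the factor $3$ in the $\xi_+$ case.

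The main obstacle is to establish these combined orbital identities. Theorem~\ref{thm:extrafneqL1L2} is only a single identity among total zeta functions and does not directly imply the combined orbital versions, so additional input is required. I would attempt them by two complementary routes: (a) applying Theorem~\ref{thm:extrafneqL1L2} to a sufficiently large family of $\Gamma$-invariant sublattices and cosets of $L_1, L_2$ and solving the resulting linear system for the orbital pieces; or (b) adapting the proof of Theorem~\ref{thm:extrafneqL1L2} so as to track the mod-$2$ factorization type of binary cubic forms through Nakagawa's bijection. This perspective explains the dichotomy observed in Theorem~\ref{thm:intro}: the orbit decompositions of $L_7, L_9$ and their duals $L_8, L_{10}$ involve only the orbit groupings $\{\mathrm{spl},\mathrm{irr}\}$ and $\{\mathrm{LQ}\}$ entirely, for which the combined orbital identities hold, whereas the decompositions of $L_3, L_4, L_5, L_6$ either break these groupings apart or involve the ``bad'' orbits $L^3$ and $L^2 M$, for which no analogous identity holds.
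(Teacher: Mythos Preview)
Your setup via characters of $L_1/2L_1$ is correct and is close in spirit to the paper's argument, but the proposal has a genuine gap: you stop exactly at the point that requires the one idea that makes the proof work. You assert that ``Theorem~\ref{thm:extrafneqL1L2} is only a single identity among total zeta functions and does not directly imply the combined orbital versions,'' and then offer two unexecuted routes. This assertion is wrong, and it is precisely the paper's insight that it is wrong. The key observation (Proposition~\ref{pr:L7L9} and the lemma preceding it) is that for $x\in L_1$ with $P(x)$ odd, the value of $P(x)\bmod 8$ depends only on $x\bmod 2$, and moreover the nonzero cosets of $L_7/2L_1$ are \emph{exactly} those with $P(x)\equiv 1\pmod 8$ while the nonzero cosets of $L_9/2L_1$ are exactly those with $P(x)\equiv 5\pmod 8$. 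In your orbit language: the union $\{\mathrm{spl}\}\cup\{\mathrm{irr}\}$ is precisely the locus $P\equiv 1\pmod 8$, and $\{\mathrm{LQ}\}$ is precisely $P\equiv 5\pmod 8$. The same holds on the $L_2$ side (Proposition~\ref{pr:L8L10}) with $Q(x)=P(x)/27$ in place of $P(x)$. Hence your ``combined orbital identity'' $\xi^{\mathrm{spl}}_-(L_1,s)+\xi^{\mathrm{irr}}_-(L_1,s)=\xi^{\mathrm{LQ}}_+(L_2,s)$ is nothing but the statement that the $n$-th coefficients of $\xi_-(L_1,s)$ and $\xi_+(L_2,s)$ agree for $n\equiv 7\pmod 8$, which is immediate from Theorem~\ref{thm:extrafneqL1L2} read coefficientwise.

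So the proof is completed by a short mod-$8$ discriminant computation, not by solving a linear system over the family of invariant lattices (your route (a), which is in danger of circularity since the relations for $L_7,\dots,L_{10}$ are exactly what is being proved) nor by reopening Nakagawa's argument (your route (b)). Once you add this observation, your framework collapses to the paper's proof: $L_7=2L_1\amalg L_{1,\equiv 1\,(8)}$, $L_9=2L_1\amalg L_{1,\equiv 5\,(8)}$, and analogously for $L_8,L_{10}$; then Theorem~\ref{thm:extrafneqL1L2} applied term-by-term gives Theorem~\ref{thm:extrafneqL7L10}. Your closing remark about $L_3,\dots,L_6$ also becomes precise: those lattices mix cosets with $P\equiv 0,1,4,5\pmod 8$ in ways not governed by $|P|\bmod 8$ alone (for example $L_3/2L_1$ contains the split coset but not the irreducible ones), so no restriction of the coefficientwise identity can apply.
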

On the other side
the table in Section \ref{sec:coefftable} asserts
that, for example, $\xi_-(L_3,s)$ and $\xi_+(L_4,s)$
do not coincide with each other.
We will reduce Theorem \ref{thm:extrafneqL7L10}
to Theorem \ref{thm:extrafneqL1L2}.
The proof is given after Proposition \ref{pr:L8L10}.

To prove this theorem, we study the relation between different lattices.
Let $\EE$ and $\OO$ be the set of even integers and odd integers,
respectively;
\[
\EE
=\{2n\mid n\in\Z\},
\quad
\OO
=\{2n+1\mid n\in\Z\}.
\]
We write elements of $L_1=\Z^4$ as $x=(a,b,c,d)$
in this section.
Hence 
\[
P(x)=b^2c^2+18abcd-4ac^3-4b^3d-27a^2d^2.
\]
We first consider the lattices in $L_1$.
We put
$\Delta=ac^3+b^3d-a^2d^2$.
Then
\[
P(x)=(bc+ad)^2-4\Delta+16(abcd-2a^2d^2).
\]

\begin{defn}
Let $L$ be a lattice in $L_1$.
For $l,N\in\Z$, $N\neq0$, we put
\[
L_{\equiv l\,(N)}=\{x\in L\mid P(x)\equiv l\mod N\}.
\]
\end{defn}

\begin{prop}\label{pr:L7L9}
We have
\begin{align*}
L_7&=2L_1\amalg L_{1,\equiv 1\,(8)},\\
L_9&=2L_1\amalg L_{1,\equiv 5\,(8)},
\end{align*}
\end{prop}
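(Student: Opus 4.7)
The plan is to reduce the statement to a mod-$2$ computation by exploiting the identity
\[
P(x) = (bc+ad)^2 - 4\Delta + 16(abcd - 2a^2d^2)
\]
displayed just before the proposition, from which $P(x) \equiv (bc+ad)^2 - 4\Delta \pmod{16}$. Before the case analysis I would handle disjointness: if $x = 2y \in 2L_1$ then $P(x) = 2^6 P(y) \equiv 0 \pmod{64}$, so $2L_1$ is disjoint from both $L_{1,\equiv 1\,(8)}$ and $L_{1,\equiv 5\,(8)}$. The inclusion $2L_1 \subset L_7 \cap L_9$ is immediate from the defining congruences.

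Next, using $n^2 \equiv n \pmod 2$, the expansion of $P(x)$ yields $P(x) \equiv bc + ad \pmod 2$, so $P(x)$ is odd iff $bc + ad$ is odd. In that case $(bc+ad)^2 \equiv 1 \pmod 8$, giving
\[
P(x) \equiv 1 - 4\Delta \pmod 8,
\]
which equals $1$ or $5$ mod $8$ according to the parity of $\Delta$. Reducing $\Delta = ac^3 + b^3d - a^2d^2$ modulo $2$ once more produces $\Delta \equiv a(c+d) + bd \pmod 2$.

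The proposition now reduces to bookkeeping on the six parity classes $(\bar a,\bar b,\bar c,\bar d) \in \F_2^4$ satisfying $\bar b \bar c + \bar a \bar d = 1$. The parity classes lying in $L_7$ are $(0,0,0,0)$, $(0,1,1,0)$, $(1,0,1,1)$, $(1,1,0,1)$; the first is $2L_1$, and a direct evaluation shows that each of the other three satisfies $\bar b\bar c + \bar a\bar d = 1$ and $a(c+d)+bd \equiv 0 \pmod 2$, whence $P(x) \equiv 1 \pmod 8$. The parity classes in $L_9$ are $(0,0,0,0)$, $(1,0,0,1)$, $(0,1,1,1)$, $(1,1,1,0)$; the nonzero ones again have $\bar b \bar c + \bar a \bar d = 1$, but now $a(c+d)+bd \equiv 1 \pmod 2$, so $P(x) \equiv 5 \pmod 8$. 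Since these six nonzero classes exhaust the parity patterns of elements of $L_1$ with $P(x)$ odd, the converse inclusions $L_{1,\equiv 1\,(8)} \subset L_7$ and $L_{1,\equiv 5\,(8)} \subset L_9$ are automatic, and both equalities of the proposition follow.

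The only real obstacle is the tabulation of the six parity patterns and verifying which of $L_7$ or $L_9$ each one lies in. Once the identity $P \equiv (bc+ad)^2 - 4\Delta \pmod{16}$ is in hand the analysis is a clean $\F_2$ computation rather than a brute-force expansion of $P(x)$ mod $8$ in sixteen cases.
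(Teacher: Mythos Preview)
Your proof is correct and follows essentially the same route as the paper: both use the identity $P(x)\equiv (bc+ad)^2-4\Delta\pmod{16}$ to reduce the question to the parity of $\Delta$ on the six parity classes with $bc+ad$ odd. The only difference is organizational---the paper isolates the characterization of $P(x)\equiv 1,5\pmod 8$ as a separate lemma before checking the inclusions, whereas you enumerate the four parity classes of $L_7$ and of $L_9$ directly and compute $P\pmod 8$ on each.
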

We start with a lemma.
\begin{lem}
Let $x=(a,b,c,d)\in L_1$.
\begin{enumerate}
\item
$P(x)\equiv 1\mod 8$ if and only if one of the following holds;
\begin{itemize}
\item[(a)]$a,d\in\EE,\ b,c\in\OO$,
\item[(b)]$a,d\in\OO,\ b+c\in\OO$.
\end{itemize}
\item
$P(x)\equiv 5\mod 8$ if and only if one of the following holds;
\begin{itemize}
\item[(a)]$b,c\in\EE,\ a,d\in\OO$,
\item[(b)]$b,c\in\OO,\ a+d\in\OO$.
\end{itemize}
\end{enumerate}
\end{lem}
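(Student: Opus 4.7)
The plan is to work modulo $8$ with the identity
\[
P(x) = (bc+ad)^2 - 4\Delta + 16(abcd - 2a^2 d^2)
\]
recorded just before the lemma statement (with $\Delta = ac^3 + b^3 d - a^2 d^2$). Since the last term is divisible by $16$, we obtain $P(x) \equiv (bc+ad)^2 - 4\Delta \pmod 8$. From this it is immediate that $P(x)$ is odd if and only if $bc + ad$ is odd, so in both parts (1) and (2) of the lemma we may restrict attention to that case.

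Assume then that $bc+ad$ is odd, so $(bc+ad)^2 \equiv 1 \pmod 8$. The residue of $P(x)$ modulo $8$ is therefore $1$ or $5$ according as $\Delta$ is even or odd. Using $n^2 \equiv n$ and $n^3 \equiv n \pmod 2$, the parity of $\Delta$ equals the parity of $ac + bd + ad$, so the entire question has been reduced to a parity calculation in $\F_2$ on the four variables $a,b,c,d$.

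The remaining step is a short case analysis. The condition that $bc+ad$ is odd splits into two disjoint families: family (A), where $b,c$ are both odd and $(a,d)$ is not $(\text{odd},\text{odd})$, and family (B), where $a,d$ are both odd and $(b,c)$ is not $(\text{odd},\text{odd})$. Each family contains three parity patterns. A direct computation in each shows that within (A) the expression $ac+bd+ad$ is even precisely when $a$ and $d$ are both even, and within (B) it is even precisely when $b+c$ is odd. Matching these outcomes against the statement, family (A) accounts for (1a) together with (2b) and family (B) accounts for (1b) together with (2a), which finishes the lemma. There is no real obstacle beyond careful bookkeeping; the argument is essentially forced once one adopts the decomposition of $P(x)$ given above the lemma.
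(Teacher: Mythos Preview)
Your proof is correct and follows essentially the same route as the paper's: both use the decomposition $P(x)\equiv (bc+ad)^2-4\Delta\pmod 8$, reduce to the parity of $\Delta$, and finish with a short case analysis. The only cosmetic difference is the organization of cases---the paper splits first on the parity of $a+d$, whereas you split on which of $bc$ and $ad$ is odd---but the computations are the same.
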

\begin{proof}
Let $P(x)\equiv 1\mod 4$.
Then $ad+bc\in\OO$ and $P(x)\equiv 1+4\Delta\mod 8$.
Hence to know $P(x)\mod 8$,
what we should see is $\Delta\mod 2$.
Now the lemma follows from the observations below.
In the following congruence expression means modulo $2$.
\begin{itemize}
\item[(I)]Assume $a+d\in\OO$.
Then $ad\in\EE,\, bc\in\OO,\, b,c\in\OO$.
Hence $\Delta\equiv ac^3+bd^3\equiv a+d\equiv 1$.
\item[(II)]Assume $a+d\in\EE$.
If $a,d\in\OO$, then $bc\in\EE$ and $\Delta\equiv b^3+c^3+1\equiv b+c+1$.
Hence either $(b,c\in\EE,\,\Delta\equiv1)$
or $(b+c\in\OO,\,\Delta\equiv0)$.
If $a,d\in\EE$, then $bc\in\OO$ and hence $\Delta\equiv 0$.
\end{itemize}
\vspace{-\baselineskip}
\end{proof}

\begin{proof}[Proof of Proposition \ref{pr:L7L9}]
We first show $L_7=2L_1\amalg L_{1,\equiv 1\,(8)}$.
Let $x=(a,b,c,d)\in L_{1,\equiv 1\,(8)}$.
Then by the lemma above we have $a+b+c,b+c+d\in\EE$
and so $x\in L_7$.
Hence $L_7\supset 2L_1\amalg L_{1,\equiv 1\,(8)}$.
We consider the reverse inclusion.
Let $x=(a,b,c,d)\in L_7$.
Then $a+b+c,b+c+d\in\EE$, and so $a+d\in\EE$.
First assume $a,d\in\OO$.
Then $b+c\in\OO$ and hence $x\in L_{1,\equiv 1\,(8)}$.
Next assume $a,d\in\EE$. Then $b+c\in\EE$ and hence
either $(a,b,c,d\in\EE)$ or $(a,d\in\EE,\,b,c\in\OO)$.
This shows $x\in 2L_1\amalg L_{1,\equiv 1\,(8)}$.
Hence $L_7\subset 2L_1\amalg L_{1,\equiv 1\,(8)}$.

The equation $L_9=2L_1\amalg L_{1,\equiv 5\,(8)}$
is proved similarly.
\end{proof}

We next consider the lattices in $L_2$.
Recall that for $x\in L_2$, $P(x)$ is a multiple of $27$.
We put $Q(x)=P(x)/27$.
Then
$Q(x)\equiv 3P(x)\mod 8$.
\begin{defn}
Let $L$ be a lattice in $L_2$.
For $l,N\in\Z$, $N\neq0$, we put
\[
L_{\equiv' l\,(N)}=\{x\in L\mid Q(x)\equiv l\mod N\}.
\]
\end{defn}
Since $Q(x)\equiv 3P(x)\mod 8$,
we have $L_{\equiv l\,(8)}=L_{\equiv' 3l\,(8)}$.
\begin{prop}\label{pr:L8L10}
We have
\begin{align*}
L_8&=2L_2\amalg L_{2,\equiv' 7\,(8)},\\
L_{10}&=2L_2\amalg L_{2,\equiv' 3\,(8)},
\end{align*}
\end{prop}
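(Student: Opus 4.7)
My plan is to parallel the proof of Proposition \ref{pr:L7L9} and exploit the already-noted congruence $Q(x) \equiv 3P(x) \pmod{8}$ for $x \in L_2$. First, for $x = (a, 3b, 3c, d) \in L_2$ I regard $x$ as an element of $L_1$ with $L_1$-coordinates $(a, 3b, 3c, d)$. Since $3b \equiv b$, $3c \equiv c$, and $3(b+c) \equiv b+c \pmod 2$, applying the lemma proved before Proposition \ref{pr:L7L9} to this element gives: $P(x) \equiv 1 \pmod 8$ iff either $a, d \in \EE$ and $b, c \in \OO$, or $a, d \in \OO$ and $b + c \in \OO$; and $P(x) \equiv 5 \pmod 8$ iff either $b, c \in \EE$ and $a, d \in \OO$, or $b, c \in \OO$ and $a + d \in \OO$.

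Because $3$ is its own inverse modulo $8$, the relation $Q(x) \equiv 3P(x) \pmod 8$ gives
\[
L_{2,\equiv' 7\,(8)} = L_2 \cap L_{1,\equiv 5\,(8)},
\qquad
L_{2,\equiv' 3\,(8)} = L_2 \cap L_{1,\equiv 1\,(8)}.
\]
Thus the proposition reduces to checking that the parity constraints defining $L_8$ and $L_{10}$ cut out exactly the union of $2L_2$ with the appropriate parity-case list above.

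For $L_8$, the two congruences $a + b + d, a + c + d \in 2\Z$ are equivalent to $b \equiv c \pmod 2$ together with $a + d \equiv b \pmod 2$. If $b, c$ are both even then $a + d$ is even; splitting further on the parity of $a, d$ gives either all four entries even (so $x \in 2L_2$) or $a, d \in \OO$, which is case (a) above for $P(x) \equiv 5$. If $b, c$ are both odd then $a + d$ is odd, which is case (b) for $P(x) \equiv 5$. The reverse inclusion is immediate since every element of $2L_2$ or $L_{2,\equiv' 7\,(8)}$ manifestly satisfies the defining conditions of $L_8$. For $L_{10}$, the parallel manipulation of $a + b + c, b + c + d \in 2\Z$ yields $a \equiv d$ and $b + c \equiv a \pmod 2$, and an identical case analysis gives $L_{10} = 2L_2 \amalg L_{2,\equiv' 3\,(8)}$.

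The main obstacle is purely combinatorial bookkeeping: ensuring the three-way matching between the parities of $(a,b,c,d)$, the residue of $P(x) \bmod 8$, and the residue of $Q(x) \bmod 8$ is done without error, and that each of the four parity cases in the lemma corresponds correctly to one of the allowed configurations in $L_8$ or $L_{10}$. There is no genuinely hard step once the lemma from Proposition \ref{pr:L7L9} is re-used through the inclusion $L_2 \subset L_1$.
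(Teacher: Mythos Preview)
Your proof is correct. The paper's argument is essentially the same idea but packaged more tersely: rather than redoing the parity case analysis, it simply intersects the already-established decomposition $L_9 = 2L_1 \amalg L_{1,\equiv 5\,(8)}$ from Proposition~\ref{pr:L7L9} with $L_2$, noting that $L_9 \cap L_2 = L_8$, $2L_1 \cap L_2 = 2L_2$, and $L_{1,\equiv 5\,(8)} \cap L_2 = L_{2,\equiv 5\,(8)} = L_{2,\equiv' 7\,(8)}$ (and similarly with $L_7$, $L_{10}$ for the second identity). You already have the second and third of these identities in your proof, so the only shortcut you are missing is the observation $L_8 = L_9 \cap L_2$, which follows immediately from the definitions since $3b \equiv b \pmod 2$; once you see that, the parity bookkeeping you carry out is subsumed by Proposition~\ref{pr:L7L9} itself.
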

\begin{proof}
The first one follows from
$L_9=2L_1\amalg L_{1,\equiv 5\,(8)}$
we proved in Proposition \ref{pr:L7L9} and
\[
L_9\cap L_2=L_8,
\quad
2L_1\cap L_2=2L_2,
\quad
L_{1,\equiv 5\, (8)}\cap L_2=L_{2,\equiv5\,(8)}=L_{2,\equiv'7\,(8)}.
\]
The second one is proved similarly.
\end{proof}

We now give a proof of Theorem \ref{thm:extrafneqL7L10}.
\begin{proof}[Proof of Theorem \ref{thm:extrafneqL7L10}]
Let $\{a_n\}$ be the coefficients of
$\xi_-(L_1,s)$;
\[
\xi_-(L_1,s)=\sum_{n\geq1}\frac{a_n}{n^s}.
\]
Then by Proposition \ref{pr:L7L9},
\begin{align*}
\xi_-(L_7,s)
&=	\frac{1}{2^{4s}}\xi_-(L_1,s)
		+\sum_{n\geq1,n\equiv 7\,(8)}\frac{a_n}{n^s},\\
\xi_-(L_9,s)
&=	\frac{1}{2^{4s}}\xi_-(L_1,s)
		+\sum_{n\geq1,n\equiv 3\,(8)}\frac{a_n}{n^s}.
\end{align*}
If we put $\xi_+(L_2,s)=\sum_{n\geq1}b_n/n^s$
then similarly by Proposition \ref{pr:L8L10} we have
\begin{align*}
\xi_+(L_8,s)
&=	\frac{1}{2^{4s}}\xi_+(L_2,s)
		+\sum_{n\geq1,n\equiv 7\,(8)}\frac{b_n}{n^s},\\
\xi_+(L_{10},s)
&=	\frac{1}{2^{4s}}\xi_+(L_2,s)
		+\sum_{n\geq1,n\equiv 3\,(8)}\frac{b_n}{n^s}.
\end{align*}
Hence the first two formulas follows from 
$\xi_-(L_1,s)=\xi_+(L_2,s)$ and $a_n=b_n$.
The rests are proved similarly.
\end{proof}
We will give some properties on $\xi_{\pm}(L_i,s)$.
These can be checked using the table of the coefficients
of $\xi_{\pm}(L_i,s)$ given in Section \ref{sec:coefftable}.
\begin{prop}
\begin{enumerate}
\item
The Dirichlet series
$\xi_{\pm}(L_i,s)$ does not have an Euler product.
\item
The linear relations of the twenty Dirichlet series
$\{\xi_{\pm}(L_i,s)\}$ are exhausted by that given in
Theorems \ref{thm:extrafneqL1L2} and \ref{thm:extrafneqL7L10}.
Namely, the $\C$-vector space spanned by Dirichlet series
by $\{\xi_{\pm}(L_i,s)\}$ is of dimension $14$.
\end{enumerate}
\end{prop}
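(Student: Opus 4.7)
The plan is to reduce both parts of the proposition to finite computations that can be read off the coefficient table of Section \ref{sec:coefftable}.

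For part (1), I would write $\xi_\pm(L_i,s) = \sum_{n\geq 1} a_n/n^s$ for the series under consideration. An Euler product would force the normalized coefficients to be multiplicative, that is $a_{mn} = a_m a_n$ whenever $\gcd(m,n) = 1$ (after dividing through by $a_1$ when the latter is nonzero). My plan is, for each of the twenty series, to locate in the table a pair of coprime integers $(m,n)$ for which this identity fails. A single witness per series suffices. Small pairs such as $(3,4)$, $(4,5)$, or $(3,8)$ are natural candidates; one combines the tabulated entries, keeping in mind the factor $3^{3s}$ from Definition \ref{defn:Dirichlet}(2), and verifies the discrepancy.

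For part (2), the relations of Theorems \ref{thm:extrafneqL1L2} and \ref{thm:extrafneqL7L10} amount to $2 + 4 = 6$ linear identities among the twenty series, producing the upper bound $\dim \leq 20 - 6 = 14$ on the span. For the matching lower bound, I would select the candidate basis
\[
\xi_\pm(L_i,s) \quad (i = 3,4,5,6), \qquad \xi_\pm(L_j,s) \quad (j = 1,7,9),
\]
consisting of fourteen series (the remaining six Dirichlet series, attached to $L_2, L_8, L_{10}$, being expressible in terms of these via the six relations), and exhibit fourteen integers $n_1,\dots,n_{14}$ such that the $14 \times 14$ matrix whose $(k,\ell)$-entry is the $n_\ell$-th Dirichlet coefficient of the $k$-th chosen series has nonzero determinant. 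All entries are read directly from the table, so the check reduces to a single rational determinant.

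The main obstacle will be careful bookkeeping rather than any conceptual difficulty. The $3^{3s}$ normalization in Definition \ref{defn:Dirichlet}(2) for even-indexed lattices must be tracked consistently when comparing coefficients across dual pairs, and the signs must be handled with care when invoking Theorems \ref{thm:extrafneqL1L2} and \ref{thm:extrafneqL7L10}, so as to confirm that the six relations truly exhaust all linear dependencies. Once these normalizations are fixed, both parts collapse to elementary verifications on the tabulated data: several multiplicativity tests for (1) and one determinant computation for (2).
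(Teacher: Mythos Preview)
Your proposal is correct and matches the paper's approach: the paper offers no detailed argument for this proposition, saying only that both statements ``can be checked using the table of the coefficients of $\xi_{\pm}(L_i,s)$ given in Section~\ref{sec:coefftable},'' and your plan is precisely a concrete implementation of that check. One small point worth making explicit: for roughly half of the twenty series the leading coefficient $a_1$ vanishes (all of the left-hand table, and $L_4^-,L_9^+,L_{10}^-$ in the right-hand table), so your parenthetical about normalizing by $a_1$ does not apply there; in those cases the cleanest criterion is the one the paper itself uses in Proposition~\ref{prop:EulerProduct}, namely that an Euler product forces $a_1 a_{pq}=a_pa_q$ for distinct primes $p,q$, which disposes of the $a_1=0$ series immediately once two coprime nonzero coefficients are exhibited.
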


\section{Analytic properties of the Dirichlet series}
\label{sec:residue}
In this section, we study analytic properties of $\xi_\pm(L_i,s)$.
We also separate the contributions of irreducible binary cubic forms
and reducible binary cubic forms in the residue formulas.
Let $V_\Z^\ird=\{x(v)\in V_\Z\mid \text{$x(v)$ is irreducible over $\Q$}\}$
and $V_\Z^\rd=V_\Z\setminus V_\Z^\ird$. They are $G_\Z$-invariant subsets.

\begin{defn}\label{defn:Dirichletird}
\begin{enumerate}
\item
For $i=1,3,5,7,9$, we put
\[
\xi_\pm^\ird(L_i,s)=\sum_{x\in{G^1_\Z}\backslash (L_i^\pm\cap V_\Z^\ird)}
\frac{({}^{\#} G^1_{\Z,x})^{-1}}{|P(x)|^s},
\qquad
\xi_\pm^\rd(L_i,s)=\sum_{x\in{G^1_\Z}\backslash (L_i^\pm\cap V_\Z^\rd)}
\frac{({}^{\#} G^1_{\Z,x})^{-1}}{|P(x)|^s}.
\]
\item
For $i=2,4,6,8,10$, we put
\[
\xi_\pm^\ird(L_i,s)=3^{3s}\sum_{x\in{G^1_\Z}\backslash (L_i^\pm\cap V_\Z^\ird)}
\frac{({}^{\#} G^1_{\Z,x})^{-1}}{|P(x)|^s},
\qquad
\xi_\pm^\rd(L_i,s)=3^{3s}\sum_{x\in{G^1_\Z}\backslash (L_i^\pm\cap V_\Z^\rd)}
\frac{({}^{\#} G^1_{\Z,x})^{-1}}{|P(x)|^s}.
\]
\end{enumerate}
\end{defn}
By definition we have $\xi_\pm(L_i,s)=\xi_\pm^\ird(L_i,s)+\xi_\pm^\rd(L_i,s)$.
\begin{defn}
For $i=1,3,5,7,9$, we put
$a_i=[\widehat{L_i}:L_{i+1}]$ and $2^{b_i}=[V_\Z:L_i]$,
where $\widehat{L_i}$ is the dual lattice of $L_i$
with respect to the bilinear form $\langle x, y\rangle$.
\end{defn}
It is easy to see that
$(a_i,b_i)$ is
$(0,0)$, $(2,1)$, $(2,3)$, $(2,2)$, $(2,2)$
for $i=1,3,5,7,9$, respectively.
The analytic properties of these series are summarized as follows.
\begin{thm}\label{thm:analyticprop}
\begin{enumerate}
\item
The Dirichlet series $\xi_\pm(L_i,s)$
can be continued holomorphically to the whole complex plane
except for simple poles at $s=1$ and $5/6$.
Furthermore, they satisfy the following functional equations
\begin{equation*}
\begin{pmatrix}
\xi_+(L_i,1-s)\\
\xi_-(L_i,1-s)\\
\end{pmatrix}
=
\frac{2^{2a_is-b_i}3^{3s-2}}{2\pi^{4s}}
\Gamma(s)^2\Gamma(s-\frac16)\Gamma(s+\frac16)
\begin{pmatrix}
\sin 2\pi s&\sin \pi s\\
3\sin \pi s&\sin2\pi s\\
\end{pmatrix}
\begin{pmatrix}
\xi_+(L_{i+1},s)\\
\xi_-(L_{i+1},s)\\
\end{pmatrix}
\end{equation*}
where $i=1,3,5,7,9$.
\item
The Dirichlet series $\xi_\pm^\ird(L_i,s)$ and $\xi_\pm^\rd(L_i,s)$
have meromorphic continuations to the whole complex plane.
The first one is holomorphic for $\Re(s)>1/2$ except for
simple poles at $s=1$ and $s=5/6$.
The second one is holomorphic for $\Re(s)>1/2$ except for
a simple pole at $s=1$.
\item
Let
\begin{gather*}
\alpha_{i,\pm}=\res_{s=1}\xi_{\pm}(L_i,s),
\quad
\beta_{i,\pm}=\res_{s=5/6}\xi_{\pm}(L_i,s),\\
\alpha_{i,\pm}^\ird=\res_{s=1}\xi_{\pm}^\ird(L_i,s),
\quad
\alpha_{i,\pm}^\rd=\res_{s=1}\xi_{\pm}^\rd(L_i,s).
\end{gather*}
Then if we put
\[
\alpha		=\frac{\pi^2}{9},\qquad
\beta		=\frac{3^{1/2}(2\pi)^{1/3}}{18}
		\zeta\left(\frac23\right)
		\Gamma\left(\frac13\right)
		\Gamma\left(\frac23\right)^{-1},
\]
the values are given by Table \ref{table:residue}.
\end{enumerate}
\end{thm}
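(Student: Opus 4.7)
The plan is to adapt Shintani's original prehomogeneous zeta function framework to each of the ten invariant lattices. For part (1), Shintani's proof for $(L_1,L_2)$ in \cite{shintania} uses only that $L_1$ is $G^1_\Z$-invariant and that its dual $\widehat{L_1}$ with respect to $\langle\cdot,\cdot\rangle$ is also $G^1_\Z$-invariant. The same Poisson summation argument applies verbatim to any invariant pair $(L,\widehat L)$, and since the local gamma factors at the real place depend only on the representation $(G,V)$ and not on the lattice, they are common to all cases. Because $\widehat{L_1}=L_2$ and $\widehat{L_i}=\tfrac12 L_{i+1}$ for $i=3,5,7,9$, rewriting the output of the functional equation in terms of $L_{i+1}$ introduces a scaling factor $2^{4s}$ (from $P(\tfrac12 x)=2^{-4}P(x)$), which combined with the covolume factor $[V_\Z:L_i]^{-1}=2^{-b_i}$ from Poisson summation and the $3^{3s}$ built into Definition \ref{defn:Dirichlet} for even indices produces the normalization $2^{2a_is-b_i}3^{3s-2}\pi^{-4s}/2$ displayed in the theorem.

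For part (2), the strategy is to compute $\xi_\pm^\rd(L_i,s)$ in closed form. A reducible form in $V_\Z$ either splits into three integral linear factors (forcing positive discriminant) or factors as a linear form times an irreducible integral quadratic form of either sign. In either case the $G^1_\Z$-orbits admit a finite parametrization by rational data together with a positive integer giving the absolute value of the discriminant, so $\xi_\pm^\rd(L_i,s)$ becomes a finite combination of products of Riemann and Hurwitz-type zeta values. From this expression one reads off that the only pole in $\Re(s)>1/2$ is a simple pole at $s=1$, and the pole structure of $\xi_\pm^\ird(L_i,s)$ then follows by subtraction from the full series in (1).

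For part (3), the residues of $\xi_\pm(L_1,s)$ and $\xi_\pm(L_2,s)$ at $s=1$ are computed by Shintani using the volume of a fundamental domain for $G^1_\Z\backslash G^1_\R$ together with the principal orbital integral, yielding $\alpha=\pi^2/9$ up to rational factors; the residues at $s=5/6$ come from his distribution formula and give the value $\beta$ involving $\zeta(2/3)$, $\Gamma(1/3)$, $\Gamma(2/3)$. For the other lattices one rescales these residues by $[V_\Z:L_i]^{-1}=2^{-b_i}$ (together with the $3^{3s}$ normalization when $i$ is even), and reads off all entries of Table \ref{table:residue}. The decomposition $\alpha_{i,\pm}=\alpha_{i,\pm}^\ird+\alpha_{i,\pm}^\rd$ then follows from the closed form of $\xi^\rd$ obtained in (2).

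The hardest step will be the explicit handling of the reducible part for each lattice in (2): one must translate the congruence conditions defining the $L_i$ into congruence conditions on the linear and quadratic factors, while simultaneously keeping track of the stabilizer orders ${}^{\#}G^1_{\Z,x}$, which can equal $3$ at singular orbits. A natural approach is to handle $L_1$ and $L_2$ first using the classical theory of binary quadratic forms, and then propagate the computation to $L_7,\dots,L_{10}$ via the coset decompositions of Propositions \ref{pr:L7L9} and \ref{pr:L8L10}, with analogous decompositions treating $L_3,\dots,L_6$.
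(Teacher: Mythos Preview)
Your plan for part (1) is sound and matches the paper: Shintani's Poisson summation argument depends only on the invariance of $L$ and $\widehat L$, and the archimedean gamma factors are lattice-independent, so only the covolume and the scaling $\widehat{L_i}=\tfrac12 L_{i+1}$ enter. The paper phrases this adelically (taking $\Phi_\fin$ to be the characteristic function of $L_i\otimes\widehat\Z$), but the content is the same.

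For part (2) you take a different route from the paper. The paper does not compute $\xi_\pm^\rd(L_i,s)$ in closed form for each lattice; it invokes the adelic formulation of \cite{taniguchi,wright} together with Shintani's analysis of the binary quadratic zeta functions, from which the pole structure of the reducible part is known a priori. Your proposal to parametrize reducible orbits directly and track the congruences defining each $L_i$ is viable in principle but considerably more laborious; you would need to redo the quadratic-form parametrization separately under each of the $2$-adic conditions defining $L_3,\dots,L_{10}$.

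There is, however, a genuine error in your plan for part (3). The claim that the residues for $L_i$ are obtained from those of $L_1$ by the single rescaling factor $[V_\Z:L_i]^{-1}=2^{-b_i}$ is false. Concretely, for $i=5$ one has $b_5=3$, but Table \ref{table:residue} gives $\alpha_{5,+}=\tfrac{7}{32}\alpha\neq 2^{-3}\alpha$ and $\beta_{5,+}=\tfrac{1}{4\sqrt[3]{2}}\beta\neq 2^{-3}\beta$. The index scaling $2^{-b_i}$ is correct only for $\alpha_{i,\pm}^\ird$, because the residue of the irreducible part comes from the principal term, which is the full $p$-adic volume $\esA_p^\ird(\Phi)=\int_{\Q_p^4}\Phi\,du$. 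The residues $\alpha_{i,\pm}^\rd$ and $\beta_{i,\pm}$ arise from boundary (singular) contributions and are governed by the distributions
\[
\esA_p^\rd(\Phi)=\int_{\Q_p^\times\times\Q_p^2}|t|_p^{2}\,\Phi(0,t,u_1,u_2)\,d^\times t\,du,\qquad
\esB_p(\Phi)=\int_{\Q_p^\times\times\Q_p^3}|t|_p^{1/3}\,\Phi(t,u_1,u_2,u_3)\,d^\times t\,du,
\]
which are not proportional to Haar measure on $V_{\Q_p}$. The paper's method is to observe that $L_i\otimes\Z_p=L_1\otimes\Z_p$ for $p\neq 2$ (when $i$ is odd), so the ratio of each residue to its $L_1$ counterpart equals the ratio of the corresponding $2$-adic integrals $\esA_2^\ird(\Phi_i)/\esA_2^\ird(\Phi_1)$, $\esA_2^\rd(\Phi_i)/\esA_2^\rd(\Phi_1)$, $\esB_2(\Phi_i)/\esB_2(\Phi_1)$, each of which is an elementary finite computation. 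Without this local analysis your argument cannot recover the entries of Table \ref{table:residue} for $L_5$ (nor, for analogous reasons, for $L_4$ and $L_6$).
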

\begin{table}
\begin{align*}
\begin{array}{c||ccccc|ccccc}
i&1&3&5&7&9&2&4&6&8&10\\
\hline\hline
&&&&&&&&&\\
\alpha_{i,+}
&	\alpha
&	\dfrac{\alpha}{2}
&	\dfrac{7}{32}\alpha
&	\dfrac{\alpha}{4}
&	\dfrac{\alpha}{4}
&	\dfrac32\alpha
&	\dfrac{9}{32}\alpha
&	\dfrac34{\alpha}
&	\dfrac38{\alpha}
&	\dfrac38{\alpha}
\\&&&&&&&&&\\
\beta_{i,+}
&	\beta
&	\dfrac{\beta}{2}
&	\dfrac{\beta}{4\sqrt[3]2}
&	\dfrac{\beta}{4}
&	\dfrac{\beta}{4}
&	\sqrt3\beta
&	\dfrac{\sqrt3}{4\sqrt[3]2}\beta
&	\dfrac{\sqrt3}2\beta
&	\dfrac{\sqrt3}{4}\beta
&	\dfrac{\sqrt3}{4}\beta
\\&&&&&&&&&\\
\alpha_{i,+}^\ird
&	\dfrac14\alpha
&	\dfrac18\alpha
&	\dfrac1{32}\alpha
&	\dfrac1{16}\alpha
&	\dfrac1{16}\alpha
&	\dfrac34\alpha
&	\dfrac3{32}\alpha
&	\dfrac38\alpha
&	\dfrac3{16}\alpha
&	\dfrac3{16}\alpha
\\&&&&&&&&&\\
\alpha_{i,+}^\rd
&	\dfrac34\alpha
&	\dfrac38\alpha
&	\dfrac3{16}\alpha
&	\dfrac3{16}\alpha
&	\dfrac3{16}\alpha
&	\dfrac34\alpha
&	\dfrac3{16}\alpha
&	\dfrac38\alpha
&	\dfrac3{16}\alpha
&	\dfrac3{16}\alpha
\\&&&&&&&&&\\\hline&&&&&&&&&\\
\alpha_{i,-}
&	\dfrac32\alpha
&	\dfrac34\alpha
&	\dfrac9{32}\alpha
&	\dfrac38\alpha
&	\dfrac38\alpha
&	3	\alpha
&	\dfrac{15}{32}\alpha
&	\dfrac32\alpha
&	\dfrac34\alpha
&	\dfrac34\alpha
\\&&&&&&&&&\\
\beta_{i,-}
&	\sqrt3\beta
&	\dfrac{\sqrt3}{2}\beta
&	\dfrac{\sqrt3}{4\sqrt[3]2}\beta
&	\dfrac{\sqrt3}{4}\beta
&	\dfrac{\sqrt3}{4}\beta
&	3\beta
&	\dfrac{3}{4\sqrt[3]2}\beta
&	\dfrac32\beta
&	\dfrac{3}{4}\beta
&	\dfrac{3}{4}\beta
\\&&&&&&&&&\\
\alpha_{i,-}^\ird
&	\dfrac34\alpha
&	\dfrac38\alpha
&	\dfrac3{32}\alpha
&	\dfrac3{16}\alpha
&	\dfrac3{16}\alpha
&	\dfrac94\alpha
&	\dfrac9{32}\alpha
&	\dfrac98\alpha
&	\dfrac9{16}\alpha
&	\dfrac9{16}\alpha
\\&&&&&&&&&\\
\alpha_{i,-}^\rd
&	\dfrac34\alpha
&	\dfrac38\alpha
&	\dfrac3{16}\alpha
&	\dfrac3{16}\alpha
&	\dfrac3{16}\alpha
&	\dfrac34\alpha
&	\dfrac3{16}\alpha
&	\dfrac38\alpha
&	\dfrac3{16}\alpha
&	\dfrac3{16}\alpha
\\
\end{array}
\end{align*}
\caption{}%
\label{table:residue}
\end{table}

\begin{proof}
For $L_1$ and $L_2$, Shintani \cite{shintania, shintanib}
proved this theorem by establishing
the theory of zeta functions associated with
the space of binary cubic forms and the space of binary quadratic forms.
His global theory was rewritten in the adelic language
by Wright \cite{wright} and the second author \cite{taniguchi}.
We would like to mention that a quite simpler version of the global theory
for the space of binary cubic forms \cite{wright}
were given by Kogiso \cite{kogiso}.
Let $\A$ and $\A_\fin$ be the rings of adeles and finite adeles
of $\Q$, respectively. Note that $\A_\fin=\widehat\Z\otimes_\Z\Q$
and $\A=\A_\fin\times\R$,
where $\widehat\Z$ is the profinite completion of $\Z$.
Let $\cs(V_{\A})$, $\cs(V_{\A_\fin})$ and $\cs(V_\R)$
be the spaces of Schwartz-Bruhat functions on each of the indicated domains.
Let $\Phi_\fin\in\cs(V_{\A_\fin})$ be the characteristic
function of $L_i\otimes_\Z\widehat\Z\subset V_{\A_\fin}$
and $\Phi_\infty\in\cs(V_\R)$ arbitrary.
Then by considering the global zeta functions in \cite{taniguchi, wright}
with the test function $\Phi_\fin\otimes\Phi_\infty\in\cs(V_\A)$,
we can prove the theorem the same way as \cite{shintania, shintanib}.
Here we illustrate the proof of (3) with $i=3,5,7,9$.
We fix a prime $p$. We fix any Haar measures $du$ on $\Q_p$
and $d^\times t$ on $\Q_p^\times$.
For $t\in \Q_p^\times$, we put $|t|_p=d(tu)/du$.
For $\Phi\in\cs(V_{\Q_p})$, we define
\begin{align*}
\esA_p^\ird(\Phi)
	&=\int_{\Q_p^4}\Phi(u_1,u_2,u_3,u_4)du_1du_2du_3du_4,\\
\esA_p^\rd(\Phi)
	&=\int_{\Q_p^\times\times \Q_p^2}
		|t|_p^2\Phi(0,t,u_1,u_2)d^\times tdu_1du_2,\\
\esB_p(\Phi)
	&=\int_{\Q_p^\times\times \Q_p^3}
		|t|_p^{1/3}\Phi(t,u_1,u_2,u_3)d^\times tdu_1du_2du_3.
\end{align*}
Let $\Phi_i$ be the characteristic function of $L_i\otimes\Z_p$.
Since $i=3,5,7,9$ we have $\Phi_i=\Phi_1$ unless $p=2$.
Hence by \cite[Proposition 8.6]{taniguchi}, we have
\[
\frac{\alpha_{i,\pm}^\ird}{\alpha_{1,\pm}^\ird}
	=\frac{\esA_2^\ird(\Phi_i)}{\esA_2^\ird(\Phi_1)},
\qquad
\frac{\alpha_{i,\pm}^\rd}{\alpha_{1,\pm}^\rd}
	=\frac{\esA_2^\rd(\Phi_i)}{\esA_2^\rd(\Phi_1)},
\qquad
\frac{\beta_{i,\pm}}{\beta_{1,\pm}}
	=\frac{\esB_2(\Phi_i)}{\esB_2(\Phi_1)}.
\]
The computations of the right hand sides in the equations
are easily carried out. For example,
\[
\frac{\esA_2^\ird(\Phi_3)}{\esA_2^\ird(\Phi_1)}=\frac12,
\qquad
\frac{\esA_2^\rd(\Phi_5)}{\esA_2^\rd(\Phi_1)}=\frac14,
\qquad
\frac{\esB_2(\Phi_7)}{\esB_2(\Phi_1)}=\frac14.
\]
Since $\alpha_{1,\pm}^\ird$, $\alpha_{1,\pm}^\rd$
and $\beta_{1,\pm}$ are known, we obtain the value.
Note that $\alpha_{i,\pm}=\alpha_{i,\pm}^\ird+\alpha_{i,\pm}^\rd$.
The rest are proved similarly and we omit the detail.
Note that $a_3=a_5=a_7=a_9=2$ in (1) comes from the fact that
for $i=3,5,7,9$ the dual lattice of $L_i$
with respect to the alternating form on $V$
is $2^{-1}L_{i+1}$.
Also $b_3=1$, $b_5=3$ and $b_7=b_9=2$ are because
$[L_1:L_3]=2$, $[L_1:L_5]=8$ and $[L_1:L_7]=[L_1:L_9]=4$,
respectively.
\end{proof}

\begin{rem}
As in \cite[Proposition 2.1]{ohno},
the functional equation in the theorem is compatible with
Theorem \ref{thm:extrafneqL7L10}. For example,
from $\xi_-(L_7,s)=\xi_+(L_8,s)$ and Theorem \ref{thm:analyticprop}
(1) for $i=7$, we can deduce $\xi_-(L_8,s)=3\xi_+(L_7,s)$.
\end{rem}

We discuss on the diagonalization
of the functional equation in Theorem \ref{thm:analyticprop} (1)
following \cite[Proposition 4.1]{dawra} and a related important
observation given in \cite[p.1088]{ohno}.
Let $a_{i+1}=a_i$ for $i=1,3,5,7,9$.
\begin{defn}\label{defn:lambda}
For $1\leq i\leq 10$ and each sign $\pm$, we put
\[
\Lambda_\pm(L_i,s)
	=\frac{2^{(a_i+1)s}3^{3s/2}}{\pi^{2s}}
		\Gamma(s)\Gamma(\frac{s}{2}+\frac14\mp\frac16)
		\Gamma(\frac{s}{2}+\frac14\mp\frac13)
	\left(\sqrt3\xi_+(L_i,s)\pm\xi_-(L_i,s)\right).
\]
\end{defn}
As a corollary to Theorem \ref{thm:analyticprop}
we have the following.
\begin{cor}\label{cor:DWfneq}
\begin{enumerate}
\item
For $i=1,3,5,7,9$,
\[
\Lambda_\pm(L_{i},1-s)=\pm3^{-1/2}2^{a_i-b_i}\Lambda_\pm(L_{i+1},s).
\]
\item Let $1\leq i\leq 10$. The function
$\Lambda_+(L_i,s)$ is holomorphic
except for simple poles at $s=0,1/6,5/6,1$,
while $\Lambda_-(L_i,s)$ is holomorphic
except for simple poles at $s=0,1$.
\item
Let $1\leq i\leq 10$.
The set of zeros of the
Dirichlet series $\sqrt3\xi_+(L_i,s)+\xi_-(L_i,s)$
and $\sqrt3\xi_+(L_i,s)-\xi_-(L_i,s)$
in the negative real axis are respectively given by
\begin{gather*}
\{-n\mid n\in\Z_{\geq1}\}\cup\{-2n+1/6\mid n\in\Z_{\geq1}\}\cup\{-2n+11/6\mid n\in\Z_{\geq1}\},\\
\{-n\mid n\in\Z_{\geq1}\}\cup\{-2n+5/6\mid n\in\Z_{\geq1}\}\cup\{-2n+7/6\mid n\in\Z_{\geq1}\},
\end{gather*}
where we put $\Z_{\geq1}=\{n\in\Z\mid n\geq1\}$.
\end{enumerate}
\end{cor}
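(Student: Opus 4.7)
The plan is to deduce all three parts of the corollary from Theorem \ref{thm:analyticprop} by diagonalizing its matrix functional equation. For part (1), I would first observe that the matrix $\begin{pmatrix}\sin 2\pi s & \sin\pi s\\3\sin\pi s & \sin 2\pi s\end{pmatrix}$ has left eigenvectors $(\sqrt{3},\pm 1)$ with eigenvalues $\sin 2\pi s\pm\sqrt{3}\sin\pi s$. Pairing the functional equation of Theorem \ref{thm:analyticprop}(1) with these vectors on the left yields the scalar identities
\begin{equation*}
\sqrt{3}\,\xi_+(L_i,1-s)\pm\xi_-(L_i,1-s)
=C_i(s)\bigl(\sin 2\pi s\pm\sqrt{3}\sin\pi s\bigr)\bigl(\sqrt{3}\,\xi_+(L_{i+1},s)\pm\xi_-(L_{i+1},s)\bigr)
\end{equation*}
for $i=1,3,5,7,9$, where $C_i(s)=\frac{2^{2a_is-b_i}3^{3s-2}}{2\pi^{4s}}\Gamma(s)^2\Gamma(s-\frac16)\Gamma(s+\frac16)$.

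Next I would substitute this into Definition \ref{defn:lambda} and simplify the ratio $\Lambda_\pm(L_i,1-s)/\Lambda_\pm(L_{i+1},s)$, using $a_{i+1}=a_i$. Writing $G_\pm(s):=\Gamma(s)\Gamma(\tfrac{s}{2}+\tfrac14\mp\tfrac16)\Gamma(\tfrac{s}{2}+\tfrac14\mp\tfrac13)$ and applying Legendre's duplication formula to each of $\Gamma(s\mp\tfrac16)$ gives the key identity
\begin{equation*}
\Gamma(s)^2\Gamma(s-\tfrac16)\Gamma(s+\tfrac16)=\frac{2^{2s-2}}{\pi}\,G_+(s)G_-(s),
\end{equation*}
which collapses $C_i(s)$ and the overall factor in $\Lambda_\pm$ so that the ratio reduces to
\begin{equation*}
\frac{\Lambda_\pm(L_i,1-s)}{\Lambda_\pm(L_{i+1},s)}
=3^{-1/2}\,2^{a_i-b_i}\cdot\frac{G_\pm(1-s)\,G_\mp(s)\,\bigl(\sin 2\pi s\pm\sqrt{3}\sin\pi s\bigr)}{4\pi^3}.
\end{equation*}
The second factor is shown to equal $+1$ in the $\Lambda_+$ case and $-1$ in the $\Lambda_-$ case by applying the reflection formula $\Gamma(z)\Gamma(1-z)=\pi/\sin\pi z$ to the three natural pairings of gamma factors in $G_\pm(1-s)G_\mp(s)$, and matching the resulting product of three sines with the factorizations $\sin 2\pi s+\sqrt{3}\sin\pi s=4\sin\pi s\cos(\tfrac{\pi s}{2}+\tfrac{\pi}{12})\cos(\tfrac{\pi s}{2}-\tfrac{\pi}{12})$ and $\sin 2\pi s-\sqrt{3}\sin\pi s=-4\sin\pi s\sin(\tfrac{\pi s}{2}+\tfrac{\pi}{12})\sin(\tfrac{\pi s}{2}-\tfrac{\pi}{12})$, where the coincidence $\pi/12=\pi(\tfrac14-\tfrac16)$ matches the shifts in the arguments of the $\Gamma$-factors exactly.

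For part (2), the prefactor $A_i(s):=\frac{2^{(a_i+1)s}3^{3s/2}}{\pi^{2s}}$ is entire and non-vanishing, and the gamma factors of $G_\pm(s)$ have no poles in $\Re(s)>0$ other than the simple pole of $\Gamma(s)$ at $s=0$. Combined with Theorem \ref{thm:analyticprop}(1), this gives candidate poles of $\Lambda_\pm(L_i,s)$ at $s=0,\tfrac56,1$, plus mirror poles at $s=\tfrac16$ via the functional equation established in part (1). The crucial input from Table \ref{table:residue} is $\sqrt{3}\,\beta_{i,+}=\beta_{i,-}$ for every $i$, which shows that $\sqrt{3}\xi_+(L_i,s)-\xi_-(L_i,s)$ is in fact holomorphic at $s=\tfrac56$; part (1) then removes the putative pole of $\Lambda_-$ at $s=\tfrac16$ as well, while for $\Lambda_+$ the residues add rather than cancel and both poles persist. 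For part (3), on the negative real axis $A_i(s)$ is positive and $\Lambda_\pm(L_i,s)$ is holomorphic and non-vanishing (reducing via the functional equation of part (1) to the region $\Re(s)>1$ where the Dirichlet series is controlled by positive coefficients). Hence $\sqrt{3}\xi_+(L_i,s)\pm\xi_-(L_i,s)=\Lambda_\pm(L_i,s)/(A_i(s)G_\pm(s))$ vanishes on $s<0$ exactly at the poles of $G_\pm(s)$, and cataloguing the poles of the three $\Gamma$-factors in each of $G_+$ and $G_-$ yields the three arithmetic progressions stated.

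The main obstacle is the gamma/trigonometric simplification in part (1): one must pick out the correct three pairs among the six $\Gamma$-factors appearing in $G_\pm(1-s)G_\mp(s)$, apply reflection to each, and then line up the resulting triple of sines with the triple factorization of $\sin 2\pi s\pm\sqrt{3}\sin\pi s$ so as to produce the clean constant $\pm 1$. A secondary technical point is the non-vanishing of $\Lambda_-(L_i,s)$ on the negative real axis required for part (3): the positivity argument that works for $\Lambda_+$ is not available for $\Lambda_-$, so a separate estimate comparing the dominant terms of $\sqrt{3}\xi_+$ and $\xi_-$ in the range of absolute convergence is required.
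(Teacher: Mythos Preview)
Your proposal is correct and follows the same route as the paper, which disposes of the corollary in three sentences: part (1) is ``a simple computation'' showing equivalence with the matrix functional equation of Theorem \ref{thm:analyticprop}(1); part (2) uses the residue table in Theorem \ref{thm:analyticprop}(3) together with (1); and part (3) is read off from (2) and Definition \ref{defn:lambda}. You have supplied the details the paper omits---the diagonalization via the eigenvectors $(\sqrt3,\pm1)$, the duplication-and-reflection bookkeeping, and the observation $\sqrt3\,\beta_{i,+}=\beta_{i,-}$ that kills the pole of $\Lambda_-$ at $s=5/6$---and these are all accurate. You are also right to flag the non-vanishing of $\Lambda_-(L_i,s)$ on the negative real axis as a genuine point: the paper's one-line justification of (3) does not address it, and your positivity argument for $\Lambda_+$ indeed does not transfer directly. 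Apart from that caveat, your write-up is a faithful expansion of the paper's proof.
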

\begin{proof}
By a simple computation we can prove that
the equalities in (1) are equivalent to
the functional equation given in
Theorem \ref{thm:analyticprop} (1).
(2) follows from
the values of residues given in
Theorem \ref{thm:analyticprop} (3)
and equalities (1) of this corollary.
(3) follows from (2) and Definition \ref{defn:lambda}.
\end{proof}

It is interesting that the poles of $\Lambda_-(L_i,s)$
at $s=5/6$ vanishes.
Taking the properties in Corollary \ref{cor:DWfneq}
into account,
it may be natural to ask that whether the Dirichlet series
$\sqrt3\xi_+(L_i,s)\pm\xi_-(L_i,s)$ has an Euler product.
The answer is negative.

\begin{prop}\label{prop:EulerProduct}
None of the Dirichlet series $\sqrt3\xi_+(L_i,s)+\xi_-(L_i,s)$,
$\sqrt3\xi_+(L_i,s)-\xi_-(L_i,s)$ $(1\leq i\leq 10)$ has an Euler product.
\end{prop}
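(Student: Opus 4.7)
The plan is to show directly that for each $i$ the coefficient sequence of $\sqrt{3}\,\xi_+(L_i,s) \pm \xi_-(L_i,s)$ fails the multiplicativity condition required of any Dirichlet series with an Euler product, by inspecting the coefficient table of Section \ref{sec:coefftable}. Write $D^{(i,\pm)}(s) := \sqrt{3}\,\xi_+(L_i,s) \pm \xi_-(L_i,s) = \sum_{n\ge 1} d^{(i,\pm)}_n\, n^{-s}$, where $d^{(i,\pm)}_n = \sqrt{3}\, a^{(i)}_n \pm b^{(i)}_n$ and $a^{(i)}_n,\, b^{(i)}_n \in \Q$ denote the coefficients of $\xi_+(L_i,s)$ and $\xi_-(L_i,s)$. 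Two simple observations cut down the workload. First, the nontrivial Galois automorphism of $\Q(\sqrt{3})/\Q$ carries $D^{(i,+)}$ to $-D^{(i,-)}$, so $D^{(i,+)}$ has an Euler product if and only if $D^{(i,-)}$ does. Second, Theorems \ref{thm:extrafneqL1L2} and \ref{thm:extrafneqL7L10} combined with the definition of $D^{(i,+)}$ yield $D^{(2,+)} = \sqrt{3}\, D^{(1,+)}$, $D^{(8,+)} = \sqrt{3}\, D^{(7,+)}$, and $D^{(10,+)} = \sqrt{3}\, D^{(9,+)}$; since multiplication by a nonzero scalar preserves the property of being an Euler product, the twenty cases reduce to the seven lattices $i \in \{1,3,4,5,6,7,9\}$ with sign $+$.

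If $D^{(i,+)}$ had an Euler product, its coefficient sequence would satisfy $d^{(i,+)}_1\, d^{(i,+)}_{mn} = d^{(i,+)}_m\, d^{(i,+)}_n$ for every pair of coprime integers $m, n \ge 1$. Because $\{1,\sqrt{3}\}$ is $\Q$-linearly independent, expanding both sides in this basis decouples the identity in $\Q(\sqrt{3})$ into the two rational conditions
\begin{align*}
3\, a^{(i)}_1\, a^{(i)}_{mn} + b^{(i)}_1\, b^{(i)}_{mn} &= 3\, a^{(i)}_m\, a^{(i)}_n + b^{(i)}_m\, b^{(i)}_n, \\
a^{(i)}_1\, b^{(i)}_{mn} + a^{(i)}_{mn}\, b^{(i)}_1 &= a^{(i)}_m\, b^{(i)}_n + a^{(i)}_n\, b^{(i)}_m,
\end{align*}
which must therefore hold simultaneously for every coprime pair $(m,n)$.

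For each of the seven representative lattices, I would then exhibit a coprime pair $(m,n)$ with $mn$ inside the range of the table in Section \ref{sec:coefftable} such that at least one of the two rational identities above fails when the coefficients $a^{(i)}_*$ and $b^{(i)}_*$ are read off the table; this contradicts the assumption of an Euler product. The main obstacle is not conceptual but bookkeeping: one needs to record, case by case, an explicit small witness $(m,n)$, for which small prime pairs such as $(2,3)$, $(2,5)$, $(3,5)$, $(3,7)$ already supply abundantly many candidates, and the verification then amounts to a finite arithmetic check against the table.
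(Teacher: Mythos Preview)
Your approach is essentially the same as the paper's: both argue that an Euler product forces $c_1c_{pq}=c_pc_q$ for distinct primes $p,q$ and then falsify this from the coefficient table in Section~\ref{sec:coefftable}. The paper is simply more direct---it commits to the single witness $(p,q)=(3,5)$, which already disposes of all twenty series, so your Galois and scalar-multiple reductions, while correct, are not needed; you should likewise pin down an explicit witness rather than leave the final check unstated.
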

\begin{proof}
If a Dirichlet series $\sum_{n\geq1}c_n/n^s$ has an Euler product,
then $c_1c_{pq}=c_pc_q$ for any distinct primes $p$ and $q$.
We can immediately confirm 
that any of our Dirichlet series does not satisfy this relation
for $p=3$ and $q=5$
using the table given in Section \ref{sec:coefftable}.
\end{proof}

Now we assume $i=1,7,9$.
Then Theorems \ref{thm:extrafneqL1L2}, \ref{thm:extrafneqL7L10}
assert
$\Lambda_\pm(L_{i+1},s)=\pm\sqrt{3}\Lambda_\pm(L_{i},s)$.
Since $a_i=b_i$ also, the functional equation
in Corollary \ref{cor:DWfneq} (1)
turns out to be of a single function
$\Lambda_\pm(L_{i},s)$.

\begin{thm}\label{cor:singlefneq}
For $i=1,2,7,8,9,10$,
\[
\Lambda_\pm(L_{i},1-s)=\Lambda_\pm(L_{i},s).
\]
\end{thm}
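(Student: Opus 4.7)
The plan is to combine the vector-valued functional equation from Corollary \ref{cor:DWfneq}(1) with the coincidence relations from Theorems \ref{thm:extrafneqL1L2} and \ref{thm:extrafneqL7L10} to collapse the two-lattice functional equation into a one-lattice one.

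First I would translate the relations $\xi_-(L_i,s)=\xi_+(L_{i+1},s)$ and $3\xi_+(L_i,s)=\xi_-(L_{i+1},s)$ (valid for $i=1,7,9$) into a relation between the combined Dirichlet series appearing inside $\Lambda_\pm$. Substituting, one computes
\[
\sqrt{3}\,\xi_+(L_{i+1},s)\pm\xi_-(L_{i+1},s)
=\sqrt{3}\,\xi_-(L_i,s)\pm 3\,\xi_+(L_i,s)
=\pm\sqrt{3}\bigl(\sqrt{3}\,\xi_+(L_i,s)\pm\xi_-(L_i,s)\bigr).
\]
Since $a_{i+1}=a_i$ by the convention introduced just before Definition \ref{defn:lambda}, the $\Gamma$-factors and the $2$- and $3$-power prefactors in $\Lambda_\pm(L_i,s)$ and $\Lambda_\pm(L_{i+1},s)$ are literally identical, so the substitution yields
\[
\Lambda_\pm(L_{i+1},s)=\pm\sqrt{3}\,\Lambda_\pm(L_i,s)\qquad(i=1,7,9).
\]

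Next I plug this identity into Corollary \ref{cor:DWfneq}(1), which reads $\Lambda_\pm(L_i,1-s)=\pm 3^{-1/2}2^{a_i-b_i}\Lambda_\pm(L_{i+1},s)$. The two $\pm\sqrt{3}$ factors cancel, leaving
\[
\Lambda_\pm(L_i,1-s)=2^{a_i-b_i}\,\Lambda_\pm(L_i,s).
\]
I then invoke the explicit values $(a_1,b_1)=(0,0)$ and $(a_7,b_7)=(a_9,b_9)=(2,2)$ recorded just after Definition \ref{defn:Dirichletird} (and reaffirmed in the proof of Theorem \ref{thm:analyticprop}): in each case $a_i=b_i$, so the exponent of $2$ vanishes and we obtain the claimed self-duality for $i=1,7,9$.

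To handle $i=2,8,10$, I would either replace $s$ by $1-s$ in the identity above for $L_{i-1}$ and combine with $\Lambda_\pm(L_i,s)=\pm 3^{-1/2}\Lambda_\pm(L_{i-1},s)\cdot(\pm\sqrt 3\text{-factor cancellation})$, or equivalently re-derive directly: using Corollary \ref{cor:DWfneq}(1) at $1-s$ in the form $\Lambda_\pm(L_{i+1},1-s)=\pm\sqrt{3}\,2^{b_i-a_i}\Lambda_\pm(L_i,s)$ and then applying $\Lambda_\pm(L_i,s)=\pm 3^{-1/2}\Lambda_\pm(L_{i+1},s)$ gives $\Lambda_\pm(L_{i+1},1-s)=2^{b_i-a_i}\Lambda_\pm(L_{i+1},s)$, and again $a_i=b_i$ finishes the proof. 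There is no real obstacle here: the argument is purely formal once the identities of Theorems \ref{thm:extrafneqL1L2}, \ref{thm:extrafneqL7L10} and Corollary \ref{cor:DWfneq}(1) are in hand; the only thing to check carefully is the sign bookkeeping in the step $\sqrt{3}\,\xi_-(L_i,s)\pm 3\,\xi_+(L_i,s)=\pm\sqrt{3}(\sqrt{3}\,\xi_+(L_i,s)\pm\xi_-(L_i,s))$ and the convention $a_{i+1}=a_i$ that makes the $\Gamma$-factors match between consecutive lattices.
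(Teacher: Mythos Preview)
Your proposal is correct and follows essentially the same route as the paper: translate the Ohno--Nakagawa relations into $\Lambda_\pm(L_{i+1},s)=\pm\sqrt{3}\,\Lambda_\pm(L_i,s)$, substitute into the diagonalized functional equation of Corollary \ref{cor:DWfneq}(1), and use $a_i=b_i$ for $i=1,7,9$ to kill the remaining $2$-power. The paper presents this argument in the paragraph immediately preceding the theorem rather than as a formal proof, but the content is identical, and your treatment of the even indices $i=2,8,10$ is the natural companion step the paper leaves implicit.
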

Namely, for $i=1,2,7,8,9,10$,
the function
$\Lambda_\pm(L_{i},s)$
is invariant if we replace $s$ by $1-s$.

We conclude this section
with deriving asymptotic behavior of some arithmetic functions.
For $n\in\Z, n\neq0$, let
$h(L_i,n)$ be the number of
$G^1_\Z$-orbit in $L_i\cap V_\Z^\ird$
with discriminant $n$.
Applying Tauberian theorem, Shintani \cite[Theorem 4]{shintanib}
obtained an asymptotic formula of the function $\sum_{0<\pm n<X}h(L_1,n)$.
By the same argument, we have the following.
Note that the functional equations of
$\xi_{\pm}(L_i,s)$ and $\xi_{\pm}^\rd(L_i,s)$ are used in the proof.
\begin{thm}\label{thm:density}
\begin{enumerate}
\item
Let $i$ be either $1,3,5,7$ or $9$. For any $\varepsilon>0$, 
\[
\sum_{0<\pm n<X}h(L_i,n)
	=\alpha_{i,\pm}^\ird X
	+\frac{\beta_{i,\pm}}{5/6}X^{5/6}
	+O(X^{2/3+\varepsilon})
	\qquad (X\to\infty).
\]
\item
Let $i$ be either $2,4,6,8$ or $10$. For any $\varepsilon>0$, 
\[
\sum_{0<\pm n<X}h(L_i,27n)
	=\alpha_{i,\pm}^\ird X
	+\frac{\beta_{i,\pm}}{5/6}X^{5/6}
	+O(X^{2/3+\varepsilon})
	\qquad (X\to\infty).
\]
\end{enumerate}
\end{thm}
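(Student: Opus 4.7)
The plan is to apply Perron/Mellin inversion to the Dirichlet series $\xi_\pm^\ird(L_i,s)$ and exploit the analytic information supplied by Theorem \ref{thm:analyticprop}. Modulo the stabilizer weights $({}^{\#} G^1_{\Z,x})^{-1}$, which differ from $1$ on only very thin families of orbits and contribute an error well below $X^{2/3+\varepsilon}$, the $n$-th Dirichlet coefficient of $\xi_\pm^\ird(L_i,s)$ is $h(L_i,\pm n)$. By Theorem \ref{thm:analyticprop} (2), $\xi_\pm^\ird(L_i,s)$ is meromorphic on $\C$ with only simple poles at $s=1$ and $s=5/6$ in the strip $\Re(s)>1/2$, with residues $\alpha_{i,\pm}^\ird$ and $\beta_{i,\pm}$, respectively.

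Following Shintani's argument for $i=1$, I would start from the truncated Perron formula
\[
\sum_{0<\pm n<X} h(L_i,n)
	=\frac{1}{2\pi i}\int_{c-iT}^{c+iT}\xi_\pm^\ird(L_i,s)\,\frac{X^{s}}{s}\,ds
	+O(\text{truncation error})
\]
for some $c>1$, and shift the contour to $\Re(s)=2/3+\varepsilon$. The residues at $s=1$ and $s=5/6$ supply the two main terms $\alpha_{i,\pm}^\ird X$ and $\beta_{i,\pm}X^{5/6}/(5/6)$.

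The main technical obstacle is bounding the shifted vertical integral, together with the two horizontal segments, by $O(X^{2/3+\varepsilon})$. Because the functional equation in Theorem \ref{thm:analyticprop} (1) is formulated for $\xi_\pm(L_i,s)$ rather than for $\xi_\pm^\ird(L_i,s)$, I would write $\xi_\pm^\ird=\xi_\pm-\xi_\pm^\rd$ and treat each summand. The reducible part $\xi_\pm^\rd(L_i,s)$ reduces to a combination of Riemann zeta and Dirichlet $L$-values with congruence conditions reflecting the lattice, whose vertical growth is classical. For $\xi_\pm(L_i,s)$, the functional equation together with Stirling's asymptotics for the Gamma factors yields polynomial growth on a line far to the left of $\Re(s)=1/2$; Phragm\'en--Lindel\"of interpolation across the resulting strip then gives a convexity estimate on $\Re(s)=2/3+\varepsilon$ sufficient, after balancing $T$ as a small power of $X$, to produce the required error bound.

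For part (2), note that for even $i$ every $x\in L_i$ satisfies $27\mid P(x)$, so the $3^{3s}$ prefactor in Definition \ref{defn:Dirichlet} renormalizes the series to
\[
\xi_\pm^\ird(L_i,s)
	=\sum_{n\geq1}\widetilde{h}(L_i,\pm27n)\,n^{-s},
\]
where $\widetilde{h}$ is the stabilizer-weighted orbit count. Applying the same Perron contour argument to this series yields the asymptotic for $\sum_{0<\pm n<X}h(L_i,27n)$ with precisely the same shape of main terms and error. Once the convexity bound is in hand, the remainder of the argument is a direct transcription of Shintani's Tauberian procedure for the case $i=1$; the step requiring genuine effort is the vertical growth estimate, and this is exactly where the functional equations for $\xi_\pm(L_i,s)$ and $\xi_\pm^\rd(L_i,s)$ enter, as indicated in the remark preceding the theorem.
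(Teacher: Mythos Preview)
Your proposal is correct and follows essentially the same approach as the paper: the paper does not give a detailed proof but simply invokes Shintani's Tauberian argument for $i=1$ (Perron inversion, contour shift past the poles at $s=1$ and $s=5/6$, and convexity bounds coming from the functional equations of $\xi_\pm(L_i,s)$ and $\xi_\pm^\rd(L_i,s)$), and you have spelled out exactly this strategy. Your handling of the stabilizer-weight discrepancy between $h(L_i,n)$ and the Dirichlet coefficients of $\xi_\pm^\ird$, and of the $3^{3s}$ normalization for even $i$, is also correct and consistent with the paper's conventions.
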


\section{Proof of Theorem \ref{thm:classifyinvlat}}
\label{sec:prooflattice}

In this section, we prove Theorem \ref{thm:classifyinvlat}. 
We use an argument similar to \cite[Section 3]{is}. 
Let $L$ be a ${\rm SL}_2(\Z)$-invariant lattice. 
By taking some constant multiple if necessary, we can assume that $L$ is contained in $L_1$ and that there exists an element $x\in L$ such that $p^{-1}x\not\in L_1$ for each prime $p$. 
Such an element $x$ is called primitive for $p$. 
We put $(L)_p=L\otimes_{\Z} \Z_p$ for a prime $p$. 
In the following, we prove that
$(L)_p=(L_1)_p$ $(p\neq 2,3)$ in Lemma \ref{lem:l1},
$(L)_3=(L_1)_3$ or $(L_2)_3$ in Lemma \ref{lem:l2}, and
$(L)_2=(L_1)_2$, $(L_3)_2$, $(L_5)_2$, $(L_7)_2$ or $(L_9)_2$
in Lemma \ref{lem:l4}. 
It is easy to see that the lattices $L_1,L_2,\ldots,L_{10}$ are $G^1_{\Z}$-invariant.
Therefore we get Theorem \ref{thm:classifyinvlat} by these facts, because $L=\cap_{p:\mathrm{prime}} ( V_{\Q} \cap (L)_p ) $. 

From now, we shall prove Lemmas \ref{lem:l1}, \ref{lem:l2} and \ref{lem:l4}. 
Since ${\rm SL}_2(\Z_p)$ contains ${\rm SL}_2(\Z)$ as a dense subgroup,
$(L)_p$ is ${\rm SL}_2(\Z_p)$-invariant.
We put
\[
u(\alpha) = \begin{pmatrix} 1 & \alpha \\ 0 & 1 \end{pmatrix},\quad
w=  \begin{pmatrix} 0 & 1 \\ -1 & 0 \end{pmatrix}
\]
and $E_1=( 1, 0, 0 , 0)$, $E_2= ( 0, 1, 0, 0 )$, $ E_3= ( 0, 0, 1, 0)$, $E_4= ( 0, 0, 0, 1)$.
The action of $u(\alpha)$ on $L$ is given by
\[
u(\alpha)\cdot x
=(
	x_1+\alpha x_2+\alpha^2 x_3+\alpha^3 x_4,
	x_2+2\alpha x_3+ 3\alpha^2 x_4,
	x_3+3\alpha x_4,
	x_4
).
\]
For $x\in L$, we put
\[
\psi(x)=u(1)\cdot x-x=(x_2+x_3+x_4,2x_3+3x_4,3x_4,0)\in L.
\]

\begin{lem}\label{lem:l1} 
If $p\not =2,3$, then $(L)_p=(L_1)_p$.
\end{lem}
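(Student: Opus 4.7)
The strategy is to start from an arbitrary primitive element $x\in L$ (one whose reduction mod $p$ is nonzero, i.e., some coordinate lies in $\Z_p^\times$) and use the $\mathrm{SL}_2(\Z_p)$-action to exhibit all four basis vectors $E_1,E_2,E_3,E_4$ inside $(L)_p$. Since we already have $(L)_p\subset(L_1)_p$, this will force the desired equality.

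First I would arrange for some $y\in(L)_p$ to have $y_4\in\Z_p^\times$. The first coordinate of $u(\alpha)\cdot x$ is
\[
x(1,\alpha)=x_1+\alpha x_2+\alpha^2 x_3+\alpha^3 x_4,
\]
a polynomial in $\alpha$ of degree at most $3$ whose reduction mod $p$ is not identically zero (because $x$ is primitive). For $p\ge 5$---which is exactly the assumption $p\neq 2,3$---at most three residues in $\F_p$ are roots, so some $\alpha\in\Z$ gives $x(1,\alpha)\in\Z_p^\times$. A direct expansion of the action formula yields $w\cdot(y_1,y_2,y_3,y_4)=(y_4,-y_3,y_2,-y_1)$, so $y:=w\cdot u(\alpha)\cdot x\in(L)_p$ has $y_4=-x(1,\alpha)\in\Z_p^\times$. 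Iterating the formula for $\psi$ from the excerpt then gives
\begin{align*}
\psi^2(y)&=(2y_3+6y_4,\ 6y_4,\ 0,\ 0),\\
\psi^3(y)&=(6y_4,\ 0,\ 0,\ 0)=6y_4\cdot E_1,
\end{align*}
hence $E_1\in(L)_p$ since $6y_4\in\Z_p^\times$.

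From $E_1$ I would recover the other basis vectors: $E_4=-w\cdot E_1\in(L)_p$, and expanding
\[
u(\alpha)\cdot E_4-E_4-\alpha^3 E_1=(0,3\alpha^2,3\alpha,0)\in(L)_p,
\]
then combining the cases $\alpha=1$ and $\alpha=2$ via $(0,12,6,0)-2(0,3,3,0)=(0,6,0,0)=6E_2$ produces $E_2$ and then $E_3$ in $(L)_p$. The only delicate point is the initial unit-in-position-$4$ step, where the hypothesis $p\neq 2,3$ enters via the elementary fact that a nonzero cubic over $\F_p$ has fewer than $p$ roots when $p\ge 5$; everything else is routine linear algebra exploiting that $2,3,6$ are units in $\Z_p$.
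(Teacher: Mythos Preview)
Your proof is correct. The overall architecture matches the paper's---arrange a unit in an outer coordinate, apply iterated $\psi$ to peel off $E_1$, then recover the remaining basis vectors---but your first step is handled differently. The paper splits into cases according to which coordinate of the primitive element $x$ is a $p$-adic unit: if $x_1$ or $x_4$ is already a unit it proceeds directly, and if instead $x_2$ or $x_3$ is a unit it uses the specific combination $u(1)\cdot x+u(-1)\cdot x-2x$, whose first entry is $2x_3$, to reduce to the previous case. Your root-counting argument (the cubic $x(1,\alpha)\bmod p$ is nonzero of degree at most $3$, hence misses some residue when $p\ge 5$) replaces this case analysis entirely and makes the role of the hypothesis $p\neq 2,3$ especially transparent. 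The paper's approach, by contrast, is slightly more constructive: it names an explicit group element rather than invoking existence of a non-root. Both routes then finish the same way, using that $2$, $3$, $6$ are units in $\Z_p$.
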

\begin{proof}
Let $x=(x_1,x_2,x_3,x_4)\in L$ be primitive for $p$. 

First we assume that $x_1\in \Z_p^{\times}$ or $x_4\in \Z_p^{\times}$. 
By considering the action of $w$, we may assume $x_4\in \Z_p^{\times}$.
Let $X_1=x_4^{-1}u(-3^{-1}x_4^{-1}x_3 ) \cdot x$.
Then since $X_1$ is of the form  $(*,*,0,1)$,
we have $6^{-1}\psi(\psi(X_1))=(1,1,0,0)$.
Since $E_2=u(-1)\cdot (1,1,0,0)$ and $E_1=\psi(E_2)$,
we have $E_1$, $E_2$, $E_3$, $E_4 \in (L)_p$. 
Hence $(L)_p=(L_1)_p$. 

Second we assume $x_1, x_4\not\in \Z_p^{\times}$. 
Then we have $x_2\in \Z_p^{\times}$ or $x_3\in \Z_p^{\times}$. 
We may assume $x_3\in \Z_p^{\times}$.
Since the first component of $u(1)\cdot x+u(-1)\cdot x-2x$
is $2x_3\in\Z_p^{\times}$,
by the argument above we have $(L)_p=(L_1)_p$. 
\end{proof}

\begin{lem}\label{lem:l2} 
$(L)_3=(L_1)_3$ or $(L_2)_3$.
\end{lem}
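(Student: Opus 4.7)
The plan is to adapt the approach of Lemma \ref{lem:l1} to $p=3$, where $3 \notin \Z_3^\times$ forces one to avoid operations like $6^{-1}\psi(\psi(\cdot))$; since $2$ remains a unit in $\Z_3$, combinations of $u(\pm 1)$ still suffice. I split into two cases according to whether $L$ contains an element with an ``inner'' coordinate $x_2$ or $x_3$ in $\Z_3^\times$.

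\emph{Case I.} Some $y\in L$ has $y_3 \in \Z_3^\times$ (via $w$, WLOG). The key computation is
\[
\tfrac12\bigl(u(1)y + u(-1)y - 2y\bigr) = (y_3,\,3y_4,\,0,\,0) \in (L)_3,
\]
and applying the same operation to $wy$ produces $(3y_4,\,-3y_3,\,0,\,0) \in (L)_3$. The $2\times 2$ coefficient matrix has determinant $-3(y_3^2+3y_4^2)$, i.e., $3$ times a unit in $\Z_3$, so solving the appropriate $\Z_3$-linear system gives $E_1, 3E_2 \in (L)_3$; by $w$-symmetry $E_4, 3E_3 \in (L)_3$, so $(L_2)_3 \subset (L)_3$. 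The quotient $(L_1)_3/(L_2)_3 \cong \F_3^2$ is the standard, irreducible $\mathrm{SL}_2(\F_3)$-representation; the image of $(L)_3$ there is non-zero (since $y_3$ is a unit) and invariant, hence equal to everything. This forces $(L)_3 = (L_1)_3$.

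\emph{Case II.} No such element, so $(L)_3 \subset (L_2)_3$. By primitivity some $y\in L$ has $y_4 \in \Z_3^\times$ (swap with $y_1$ via $w$ if necessary). If $y_1 \in 3\Z_3$, replace $y$ by $u(1)y \in L$, whose first coordinate $y_1+y_2+y_3+y_4 \equiv y_4 \pmod 3$ is a unit while the second and third coordinates remain in $3\Z_3$. So WLOG both $y_1, y_4 \in \Z_3^\times$. A direct computation gives $\psi^3(y) = 6y_4 E_1$, whence $3E_1 \in (L)_3$; combining $\psi^2(y)$ with a suitable $\Z_3$-multiple of $3E_1$ (the condition $y_2, y_3 \in 3\Z_3$ ensures the needed coefficient is integral) extracts $3E_2 \in (L)_3$, and $w$-symmetry gives $3E_3 \in (L)_3$. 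Subtracting $y_2 E_2 + y_3 E_3$ from $y$ (valid since each summand lies in $\Z_3 \cdot 3E_i \subset (L)_3$) produces $(y_1,0,0,y_4) \in (L)_3$; pairing this with its $w$-image and using $y_1^2+y_4^2 \equiv 2 \pmod 3$ gives $E_1, E_4 \in (L)_3$. Thus $(L_2)_3 \subset (L)_3$, so $(L)_3 = (L_2)_3$.

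The main obstacle throughout is the book-keeping to ensure every denominator appearing in these $\Z_3$-linear combinations is a unit. The two critical non-vanishings are $y_3^2 \not\equiv 0 \pmod 3$ in Case I and $y_1^2+y_4^2 \equiv 2 \not\equiv 0 \pmod 3$ in Case II, both immediate since the relevant coordinates are units; at all other places the powers of $3$ in numerator and denominator align, so no illegal division occurs.
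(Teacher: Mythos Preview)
Your argument is correct, modulo one wording slip in Case~I: applying $\tfrac12\bigl(u(1)+u(-1)-2\bigr)$ directly to $wy=(y_4,-y_3,y_2,-y_1)$ yields $(y_2,-3y_1,0,0)$, not $(3y_4,-3y_3,0,0)$. What produces the vector you want is to apply $w$ to the \emph{result} $(y_3,3y_4,0,0)$, obtaining $(0,0,3y_4,-y_3)$, and then repeat the operation; the determinant $-3(y_3^2+3y_4^2)$ you write down confirms this is what you intended, and with that correction everything goes through.

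Your route differs from the paper's in two respects. In Case~I the paper builds $E_1,E_2\in(L)_3$ by an explicit chain of six intermediate elements $X_1,\ldots,X_6$, whereas you first reach $(L_2)_3\subset(L)_3$ via the symmetric second difference and then finish by invoking irreducibility of $(L_1)_3/(L_2)_3$ as an $\mathrm{SL}_2(\F_3)$-module; this is shorter and more conceptual. Your global case split is also sharper: by making Case~II the hypothesis that \emph{no} element of $L$ has an inner coordinate in $\Z_3^\times$, you get $(L)_3\subset(L_2)_3$ immediately and can conclude equality on the spot. The paper instead splits on a single fixed primitive element, so its second case only yields $(L_2)_3\subset(L)_3$, and a closing paragraph is needed to reduce any strict overshoot back to the first case. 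The paper's version trades brevity for being completely explicit, requiring no representation-theoretic input.
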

\begin{proof}
Let $x=(x_1,x_2,x_3,x_4)\in L$ be primitive for $3$. 

First we assume $x_2\in \Z_3^{\times}$ or $x_3\in \Z_3^{\times}$. 
Taking the action of $w$ into account, we may assume $x_3\in \Z_3^{\times}$.
Let
$X_1=(2x_3+3x_4)^{-1}\psi(x)= ( x_1' , 1 , x_3' , 0 )$
and
$X_2 = (2x_3+6x_4)^{-1}\psi(\psi(x))= ( 1 , x_2' , 0 , 0 )$.
Then $x_2', x_3'\in 3\Z_3$.
Further we put
$X_3=X_1 - x_1'X_2 =( 0 , 1-x_1'x_2' , x_3' , 0 )$, $1-x_1'x_2'\in \Z_3^{\times}$, 
$X_4=(1-x_1'x_2')^{-1}(w \cdot X_3) = ( 0 , x_2'' , 1 , 0 )$, $x_2''\not\in \Z_3^{\times}$, 
$X_5 = u(-2^{-1}x''_2) \cdot X_4 = ( x_1'' , 0 , 1 , 0 )$, 
$X_6=\psi(X_5) = ( 1 , 2 , 0 , 0) $.
Then since $E_1=2^{-1}\psi(X_6)$ and 
$E_2=2^{-1}(X_6-E_1)$,
we have $(L)_3=(L_1)_3$. 

Second we assume $x_2,x_3\not\in \Z_3^{\times}$. 
Then we have $x_1\in \Z_3^{\times}$ or $x_4\in \Z_3^{\times}$. 
We may assume $x_4\in \Z_3^{\times}$. 
We have 
$X_7=\psi(x)= ( x_2+x_3+x_4 , 2x_3+3x_4 , 3x_4 , 0 )$, $x_2+x_3+x_4\in \Z_3^{\times}$, $2x_3+3x_4\in 3\Z_3$, $3x_4\in 3\Z_3^{\times}$, 
$X_8 = u(-2^{-1}x_4^{-1}\cdot 3^{-1}(2x_3+3x_4) )\cdot X_7=( x_1' , 0 , 3x_4 , 0 )$, $x_1' \in \Z_3^{\times}$, $3x_4\in 3\Z_3^{\times}$.
Then since
$x_4^{-1}\psi(X_8)= 3 E_1  + 6E_2$, 
$2^{-1}\psi( 3 E_1  + 6E_2  )= 3E_1$, 
$3E_2= 2^{-1}( (3 E_1  + 6E_2)-3E_1  )$ and
$E_1=x_1^{\prime -1}\cdot (X_8-3x_4E_3)$,
we get $(L_2)_3 \subset (L)_3$. 

We see $(L_2)_3 \subset (L)_3 \subset (L_1)_3$ from the above results. 
Suppose $(L_2)_3 \neq (L)_3$. 
Since $ (L_1)_3 / (L_2)_3$ is represented by the set $\left\{ aE_2+bE_3 \,\, ; \,\, 0\leq a,b \leq 2 \right\}$, 
$(L)_3$ has an element of the form $aE_2+bE_3$ for some $(a,b)\neq (0,0)$.
Hence we have $(L)_3=(L_1)_3$. 
So we get this lemma. 
\end{proof}

\begin{lem}\label{lem:l3}
$(L)_2$ contains $(L_5)_2$ or $(L_9)_2$.
\end{lem}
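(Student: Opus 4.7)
The plan is to fix a primitive $x \in L$ for $p=2$ and proceed by case analysis on the parities of $(x_1,x_2,x_3,x_4)$. Since $\mathrm{SL}_2(\Z)$ is dense in $\mathrm{SL}_2(\Z_2)$ and $(L)_2$ is a closed $\Z_2$-submodule of $(L_1)_2$, the completion $(L)_2$ is stable under all of $\mathrm{SL}_2(\Z_2)$, and hence closed under $\Z_2$-linear combinations of elements of the form $u(\alpha)y$, $wy$, $\psi(y)=u(1)y-y$, and $u(1)y+u(-1)y-2y = (2y_3,6y_4,0,0)$. The strategy in every case is two-fold: first establish $2(L_1)_2 \subseteq (L)_2$, then observe that the image $\bar L$ of $(L)_2$ in $L_1/2L_1$ is an $\mathrm{SL}_2(\F_2)$-invariant subgroup; a direct orbit enumeration on $L_1/2L_1 \cong \F_2^4$ shows that any nonzero such subgroup contains $L_5/2L_1 = \{0, E_2+E_3\}$ or $L_9/2L_1 = \{0, E_1+E_4, E_1+E_2+E_3, E_2+E_3+E_4\}$, which combined with $2(L_1)_2 \subseteq (L)_2$ delivers the conclusion.

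First I would handle the case where $x_1$ or $x_4$ is odd; replacing $x$ by $wx$ if necessary, assume $x_4$ is odd. Then $\psi(\psi(\psi(x))) = (6x_4,0,0,0) \in (L)_2$ with $6x_4 \in 2\Z_2^\times$, so $2E_1 \in (L)_2$. Subtracting $x_3 \cdot 2E_1$ from $u(1)x+u(-1)x-2x=(2x_3,6x_4,0,0)$ isolates $6x_4 E_2 \in (L)_2$, giving $2E_2 \in (L)_2$; applying $w$ yields $2E_3, 2E_4 \in (L)_2$, so $2(L_1)_2 \subseteq (L)_2$, completing this case.

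The remaining case is $x_1, x_4$ both even, where primitivity forces $x_2$ or $x_3$ odd; replacing $x$ by $wx$ if necessary, assume $x_3$ is odd. If $x_2$ is even, then $\psi(x)_1 = x_2+x_3+x_4$ is odd, so $\psi(x) \in L$ is primitive with a unit first coordinate and the previous case applies to it. The essential subcase is when $x_2, x_3$ are both odd, so $x = (2a,1+2b,1+2c,2d)$ and $x \equiv E_2+E_3 \pmod{2L_1}$. Here I would set $\alpha := u(1)x+u(-1)x-2x = (2(1+2c),12d,0,0)$ and $\beta := u(1)(w\alpha)+u(-1)(w\alpha)-2(w\alpha) = (24d,-12(1+2c),0,0)$, both in $(L)_2$. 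The $2\times 2$ determinant of their first-two-coordinate projection is $-24[(1+2c)^2+12d^2]$, and since $(1+2c)^2+12d^2 \equiv 1 \pmod 4$ is a $2$-adic unit, a suitable $\Z_2$-linear combination of $\alpha$ and $\beta$ produces $2E_1 \in (L)_2$; applying $w$ gives $2E_4 \in (L)_2$. Then $x' := x - a(2E_1) - d(2E_4) = (1+2b)E_2 + (1+2c)E_3 \in (L)_2$, and combining it with $wx'$ via $(1+2b)x' - (1+2c)(wx') = [(1+2b)^2+(1+2c)^2]E_2$ together with the fact that $(1+2b)^2+(1+2c)^2 \equiv 2 \pmod 4$ has $2$-adic valuation exactly $1$ yields $2E_2 \in (L)_2$, and symmetrically $2E_3 \in (L)_2$. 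So $2(L_1)_2 \subseteq (L)_2$, and then $E_2+E_3 = x' - 2bE_2 - 2cE_3 \in (L)_2$ gives $(L)_2 \supseteq (L_5)_2$.

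The hard part will be this last subcase, where $x \bmod 2$ lies in the one-element $\mathrm{SL}_2(\F_2)$-orbit $\{E_2+E_3\}$; iterating $\psi$ alone only produces elements whose $2$-adic valuations are too large (since $\psi(x) \in 2L_1$ here), and no $u(\alpha)$-translate of $x$ escapes this orbit. The rescue is to bring $w$ into play to create, in the relevant coordinate plane, a second vector $\Z_2$-linearly independent from the first whose joint determinant has exactly the right $2$-adic valuation---this is the role played by the identities $(1+2c)^2+12d^2 \equiv 1$ and $(1+2b)^2+(1+2c)^2 \equiv 2 \pmod 4$, which together let us extract all four generators $2E_1,2E_2,2E_3,2E_4$ and then the representative $E_2+E_3$.
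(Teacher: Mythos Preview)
Your proof is correct. Both arguments share the same top-level case split (on whether $x_1$ or $x_4$ is a unit, then on the parity pattern of $x_2,x_3$), and in each case both reduce to showing $2(L_1)_2\subset (L)_2$ together with an extra element. The differences are in tactics.

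In Case~(i), the paper first normalizes via $u(-3^{-1}x_4^{-1}x_3)$ to reach $X_2=(x_1',1,1,0)$, then explicitly splits on the parity of $x_1'$ to decide between $(L_5)_2$ and $(L_9)_2$. You instead extract $2E_1,2E_2,2E_3,2E_4$ via $\psi^3(x)$ and the second-difference operator $u(1)x+u(-1)x-2x$, and then invoke a representation-theoretic shortcut: the image $\bar L\subset L_1/2L_1\cong\F_2^4$ is a nonzero $\mathrm{SL}_2(\F_2)$-submodule, and an orbit check shows every such submodule contains $L_5/2L_1$ or $L_9/2L_1$. This replaces the (i-a)/(i-b) split entirely and is a clean conceptual move; it also anticipates part of the content of the subsequent lemma classifying $(L)_2$. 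In the hard subcase $x\equiv E_2+E_3\pmod 2$, the paper builds $X_3=(2,c,0,0)$ with $c\in 4\Z_2$ from $\psi^2(x)$ and proceeds by successive subtractions; you instead produce two vectors $\alpha,\beta$ in $\Z_2E_1\oplus\Z_2E_2$ via the second-difference operator and $w$, and use the $2$-adic valuation of their determinant to isolate $2E_1$. Both routes work; yours is a bit more systematic in its use of the second-difference operator, while the paper's is slightly more hands-on but avoids the auxiliary $\F_2$-module enumeration.
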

\begin{proof}
Let $x=(x_1,x_2,x_3,x_4)\in L$ be primitive for $2$.

(i) We assume $x_1\in \Z_2^{\times}$ or $x_4\in \Z_2^{\times}$. 
We may assume $x_4\in \Z_2^{\times}$. 
Let
$X_1=u(-3^{-1}x_4^{-1}x_3) \cdot x= ( * , * , 0 , x_4 )$, 
$X_2=(3x_4)^{-1}\psi(X_1)= ( x_1' , 1  , 1 , 0 )$.
Then since $2E_1+2E_2 =\psi(X_2)$ and
$2E_1=\psi(\psi(X_2))$,
we have $2E_1$, $2E_2$, $2E_3$, $2E_4 \in (L)_2$. \\
(i-a) We assume $x_1' \not\in \Z_2^{\times} $. 
We have $E_2+E_3=X_2- (2^{-1} x_1') \cdot (2E_1) \in (L)_2$. 
Since $L_5=\Z (2E_1) + \Z (2E_4) + \Z (E_2+E_3) + \Z (2E_2) $,
we get $(L_5)_2 \subset (L)_2$. \\
(i-b) We assume $x_1' \in \Z_2^{\times} $. 
From $x_1'=1+x_1''$, $(x_1''\in 2 \Z_2 )$, we have $X_2- (2^{-1}x_1'')\cdot (2E_1)=E_1+E_2+E_3 $. 
Since $L_9= \Z (E_1+E_2+E_3) +  \Z (E_2+E_3+E_4) + \Z (2E_1) + \Z (2E_2)$.
we get $(L_9)_2 \subset (L)_2  $.

(ii) We assume $x_1,x_4 \not\in \Z_2^{\times}$. \\
(ii-a) We assume $x_2+x_3\in \Z_2^{\times}$. 
Since the first component of $\psi(x)$
is $x_2+x_3+x_4\in \Z_2^{\times}$,
we can reduce the case (ii-a) to the case (i). \\
(ii-b) We assume $x_2+x_3 \not\in \Z_2^{\times}$. 
Since $x$ is primitive, we have $x_2,\, x_3\in \Z_2^{\times}$. 
We have 
$X_3=(x_3+3x_4)^{-1}\psi(\psi(x))=( 2 , c , 0 , 0 )$, $c\in 4\Z_2$, 
$X_4=w^{-1}\cdot X_3=-cE_3+2E_4$. 
Furthermore we put
$X_5= x - (2^{-1}x_1)\cdot X_3 - (2^{-1}x_4) \cdot X_4=(0,\alpha,\beta,0)$.
Then $\alpha=x_2-2^{-1}x_1c\in\Z_2^\times$,
$\beta=x_3+2^{-1}x_4c\in \Z_2^{\times}$.
Let
$X_6=\psi(X_5) - 2^{-1}(\alpha+\beta)X_3= ( 0 , 2\beta - 2^{-1}(\alpha+\beta)c  , 0 , 0 ) $. Then $ 2\beta - 2^{-1}(\alpha+\beta ) c \in 2\Z_2^{\times}$. 
Hence we have 
$ 2 E_2 =  (\beta - 2^{-2}(\alpha+\beta)c )^{-1}X_6$, 
$ 2E_1 = X_3 - (2^{-1}c)\cdot (2E_2)$, $2E_3$, $2E_4\in (L)_2$, 
$E_2+E_3 = X_5 - 2^{-1}(\alpha-1)\cdot (2E_2) - 2^{-1}(\beta - 1)\cdot (2E_3) \in (L)_2 $. 
Therefore we get $(L_5)_2 \subset (L)_2$.
\end{proof}

\begin{lem}\label{lem:l4}
$(L)_2=(L_1)_2$, $(L_3)_2$, $(L_5)_2$, $(L_7)_2$ or $(L_9)_2$.
\end{lem}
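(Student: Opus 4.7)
The plan is to use Lemma \ref{lem:l3} as the launching point: it gives $(L_5)_2\subset (L)_2$ or $(L_9)_2\subset (L)_2$. In either case $(L)_2$ is sandwiched between a known invariant sublattice and $(L_1)_2$, so classifying the possibilities reduces to enumerating the $\mathrm{SL}_2(\Z_2)$-invariant $\F_2$-subspaces of a finite quotient.

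For Case A ($(L_5)_2\subset(L)_2$), the quotient $(L_1)_2/(L_5)_2$ is $3$-dimensional over $\F_2$; using $E_2+E_3\in L_5$ and $2E_i\in L_5$ one gets a basis $\bar E_1,\bar E_2,\bar E_4$ with $\bar E_3=\bar E_2$. I would compute the matrices of $u(1)$ and $w$ on this basis and decompose the $7$ nonzero vectors into orbits: a fixed vector $\bar E_1+\bar E_2+\bar E_4$ together with two orbits of size three, $\{\bar E_1,\bar E_4,\bar E_1+\bar E_4\}$ and $\{\bar E_2,\bar E_1+\bar E_2,\bar E_2+\bar E_4\}$. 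From this, the only proper nonzero invariant subspaces are $\langle \bar E_1+\bar E_2+\bar E_4\rangle$ and $\langle \bar E_1,\bar E_4\rangle$. Their preimages, identified via index considerations and a direct check on a coset representative, are $L_7$ and $L_3$ respectively. So in Case A, $(L)_2\in\{(L_1)_2,(L_3)_2,(L_5)_2,(L_7)_2\}$.

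For Case B ($(L_9)_2\subset(L)_2$), the quotient $(L_1)_2/(L_9)_2$ is only $2$-dimensional since $[L_1:L_9]=4$. Taking $\bar E_1,\bar E_2$ as a basis, a direct computation shows $\bar E_4=\bar E_1$, $\bar E_3=\bar E_1+\bar E_2$, and crucially that both $u(1)$ and $w$ act as the same upper-unipotent matrix $\left(\begin{smallmatrix}1&1\\0&1\end{smallmatrix}\right)$. Hence the only proper nonzero invariant subspace is $\langle \bar E_1\rangle$, whose preimage is $L_3$. So in Case B, $(L)_2\in\{(L_1)_2,(L_3)_2,(L_9)_2\}$.

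Taking the union over the two cases yields exactly the five lattices in the statement. The main obstacle I anticipate is the orbit analysis in Case A: one has to be careful to rule out invariant \emph{sets} of nonzero vectors that are unions of orbits but fail to be $\F_2$-linearly closed (e.g.\ $\{0\}$ together with the fixed vector and one size-$3$ orbit gives a $5$-element invariant union, which is obviously not a subspace; similarly the two size-$3$ orbits together with $0$ form a $7$-element set that is not a subspace). Once the correct invariant subspaces are identified, their matching with the $L_i$ follows from the tabulated indices $[L_1:L_3]=2$, $[L_1:L_5]=8$, $[L_1:L_7]=[L_1:L_9]=4$ recorded after Theorem \ref{thm:classifyinvlat}. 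Alternatively, one could carry out an explicit element-chase in the style of Lemmas \ref{lem:l1}--\ref{lem:l3}, but the subspace enumeration seems cleaner given how small the quotients are.
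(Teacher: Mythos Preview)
Your proposal is correct and follows the same overall strategy as the paper: invoke Lemma~\ref{lem:l3} to sandwich $(L)_2$ between $(L_5)_2$ or $(L_9)_2$ and $(L_1)_2$, then classify the $\mathrm{SL}_2(\Z_2)$-invariant intermediate lattices via the finite quotient. Your orbit computations in Case~A and the identification $\bar E_4=\bar E_1$, $\bar E_3=\bar E_1+\bar E_2$ in Case~B check out, and the matching of the invariant subspaces with $L_3$ and $L_7$ by index is sound.

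The execution differs somewhat from the paper's. The paper proceeds by an element chase: for each nonzero coset representative in $(L_1)_2/(L_5)_2$ (respectively $(L_1)_2/(L_9)_2$) it asks what happens if $(L)_2$ contains that representative, applies $\psi$ and $w$ to force further elements into $(L)_2$, and thereby shows $(L)_2$ must contain one of $(L_3)_2$, $(L_7)_2$, or $(L_1)_2$. You instead compute the matrices of $u(1)$ and $w$ on the quotient as an $\F_2$-representation and read off the invariant subspaces directly from the orbit structure. Your route is a bit more systematic and makes the absence of other invariant sublattices transparent (the $5$- and $7$-element orbit unions are visibly not subspaces), while the paper's approach stays closer to the concrete style of Lemmas~\ref{lem:l1}--\ref{lem:l3} and avoids setting up the representation. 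Either way the substance is the same.
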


\begin{proof}
Form Lemma \ref{lem:l3}, we know $(L_5)_2 \subset (L)_2\subset (L_1)_2 $ or $(L_9)_2 \subset (L)_2 \subset (L_1)_2 $. 
Hence we have only to take all representation elements of $(L_1)_2 / (L_5)_3 $, $(L_1)_2 / (L_9)_3 $ and compute all cases for subspaces containing representation elements.

(I) We treat the case $(L_5)_2 \subset (L)_2\subset (L_1)_2 $. 
Let $(L)_2\neq (L_5)_2$. 
Since $L_1=\Z E_1 + \Z E_4 + \Z (E_2+E_3) + \Z E_2$
and $L_5 = \Z (2E_1) + \Z (2E_4) + \Z (E_2+E_3) + \Z (2E_2) $,
$(L_1)_2 / (L_5)_2$ is represented by the set
$\{ aE_1+bE_4+cE_2 \,  ; \, 0 \leq a,b,c\leq 1  \}$. \\
(I-1) $(L)_2$ contains one of $E_1$, $E_4$, $E_1+E_4$. 
We easily see that $(L)_2$ contains $(L_3)_2$. 
Since $(L_1)_2 / (L_3)_2 \cong\Z/2\Z$,
$(L)_2$ is either$(L_1)_2$ or $(L_3)_2$. \\
(I-2) $(L)_2$ contains either $E_2$, $E_1+E_2$ or $E_2+E_4$.
Since $E_1=\psi(E_2)=\psi(E_1+E_2)=\psi(w\cdot (E_2+E_4))-2E_2$,
we have $(L)_2=(L_1)_2$. \\
(I-3) $(L)_2$ contains $E_1+E_2+E_4$. 
Since $L_7 = \Z (E_1+E_2+E_4) + \Z (E_1+E_3+E_4) + \Z (2E_1) + \Z (2E_4)$, we see $ (L_7)_2 \subset (L)_2 $. 
Furthermore $(L_1)_2 / (L_7)_2$ is represented by
$\{ 0, E_1, E_4, E_1+E_4 \}$. 
If $(L)_2$ contains one of this representation element,
then we have $(L)_2 = (L_1)_2$. 
Therefore $(L)_2=(L_1)_2$ or $(L_7)_2$.

(II) We treat the case $(L_9)_2 \subset (L)_2\subset (L_1)_2 $. 
Suppose $(L)_2\neq (L_9)_2$. 
Since
$L_1=\Z E_1 + \Z E_2 + \Z (E_1+E_2+E_3) + \Z (E_2+E_3+E_4)$
and $L_9 = \Z (E_1+E_2+E_3) + \Z (E_2+E_3+E_4) + \Z (2E_1) + \Z (2E_2) $,
$(L_1)_2 / (L_9)_2$
is represented by $\{ aE_1+bE_2 \, ; \,  0\leq a,b\leq 1 \}$. \\
(II-1) $(L)_2$ contains $E_1$. 
We have $ (L_3)_2 \subset (L)_2 $. 
Hence we have $(L)_2=(L_1)_2$ or $(L_3)_2$. \\
(II-2) $(L)_2$ contains $E_2$ or $E_1+E_2$. 
Since $\psi(E_2)=\psi(E_1+E_2)=E_1$, we have $ (L_1)_2 = (L)_2 $. 

Form (I) and (II), we get this lemma.
\end{proof}

\section{Table of the coefficients}\label{sec:coefftable}
We give the table of about first fifty coefficients
of the Dirichlet series $\xi_{\pm}(L_i,s)$.
In the table, we give the value multiplied by $3$
for the each coefficient except for
$\xi_+(L_{i},s)$, $i=2,4,6,8,10$
where in which cases
we give the exact value of the coefficients.
Hence the table means, for example,
\begin{align*}
\xi_+(L_4,s)
&=\frac{1/3}{3^s}+\frac1{11^s}+\frac1{19^s}
	+\frac{4/3}{27^s}+\frac1{35^s}
	+\frac1{43^s}+\frac1{48^s}+\frac1{51^s}+\dots,\\
\xi_-(L_7,s)
&=\frac{1/3}{1^s}+\frac1{9^s}+\frac{1/3}{16^s}
	+\frac1{17^s}+\frac1{25^s}
	+\frac1{33^s}+\frac1{41^s}+\frac{5/3}{49^s}+\dots,\\
\xi_+(L_8,s)
&=\frac{1}{1^s}+\frac3{9^s}+\frac1{16^s}
	+\frac{3}{17^s}+\frac3{25^s}
	+\frac3{33^s}+\frac3{41^s}+\frac5{49^s}+\dots.
\end{align*}
\begin{center}
\begin{tabular}{|r|cc|cccc|cccc|}
\hline
&\!\!\! $L_1^{-}$\!\!\!
&\!\!\! $L_2^{+}$\!\!
&\!\!   $L_3^{-}$\!\!\!
&\!\!\! $L_4^{+}$\!\!\!
&\!\!\! $L_5^{-}$\!\!
&\!\!\! $L_6^{+}$\!\!\!
&\!\!   $L_7^{-}$\!\!\!
&\!\!\! $L_8^{+}$\!\!\!
&\!\!\! $L_9^{-}$\!\!\!
&\!\!\! $L_{10}^{+}$\!\!
\\
\hline
3 & 3 & 3 & 3 & 1 & 0 & 1 & 0 & 0 & 3 & 3 \\
4 & 3 & 3 & 0 & 0 & 0 & 0 & 0 & 0 & 0 & 0 \\
7 & 3 & 3 & 3 & 0 & 3 & 3 & 3 & 3 & 0 & 0 \\
8 & 3 & 3 & 0 & 0 & 0 & 0 & 0 & 0 & 0 & 0 \\
11 & 3 & 3 & 3 & 3 & 0 & 3 & 0 & 0 & 3 & 3 \\
12 & 3 & 3 & 0 & 0 & 0 & 0 & 0 & 0 & 0 & 0 \\
15 & 3 & 3 & 3 & 0 & 3 & 3 & 3 & 3 & 0 & 0 \\
16 & 6 & 6 & 3 & 0 & 0 & 3 & 0 & 0 & 0 & 0 \\
19 & 3 & 3 & 3 & 3 & 0 & 3 & 0 & 0 & 3 & 3 \\
20 & 3 & 3 & 0 & 0 & 0 & 0 & 0 & 0 & 0 & 0 \\
23 & 9 & 9 & 3 & 0 & 3 & 9 & 9 & 9 & 0 & 0 \\
24 & 3 & 3 & 0 & 0 & 0 & 0 & 0 & 0 & 0 & 0 \\
27 & 6 & 6 & 6 & 4 & 0 & 4 & 0 & 0 & 6 & 6 \\
28 & 9 & 9 & 0 & 0 & 0 & 6 & 0 & 0 & 0 & 0 \\
31 & 9 & 9 & 3 & 0 & 3 & 9 & 9 & 9 & 0 & 0 \\
32 & 6 & 6 & 3 & 0 & 0 & 3 & 0 & 0 & 0 & 0 \\
35 & 3 & 3 & 3 & 3 & 0 & 3 & 0 & 0 & 3 & 3 \\
36 & 3 & 3 & 0 & 0 & 0 & 0 & 0 & 0 & 0 & 0 \\
39 & 3 & 3 & 3 & 0 & 3 & 3 & 3 & 3 & 0 & 0 \\
40 & 3 & 3 & 0 & 0 & 0 & 0 & 0 & 0 & 0 & 0 \\
43 & 3 & 3 & 3 & 3 & 0 & 3 & 0 & 0 & 3 & 3 \\
44 & 9 & 9 & 6 & 0 & 0 & 0 & 0 & 0 & 0 & 0 \\
47 & 3 & 3 & 3 & 0 & 3 & 3 & 3 & 3 & 0 & 0 \\
48 & 6 & 6 & 3 & 3 & 3 & 3 & 3 & 3 & 3 & 3 \\
51 & 3 & 3 & 3 & 3 & 0 & 3 & 0 & 0 & 3 & 3 \\
\hline
\end{tabular}\qquad
\begin{tabular}{|r|cc|cccc|cccc|}
\hline
&\!\!\! $L_1^{+}$\!\!\!
&\!\!\! $L_2^{-}$\!\!
&\!\!   $L_3^{+}$\!\!\!
&\!\!\! $L_4^{-}$\!\!\!
&\!\!\! $L_5^{+}$\!\!
&\!\!\! $L_6^{-}$\!\!\!
&\!\!   $L_7^{+}$\!\!\!
&\!\!\! $L_8^{-}$\!\!\!
&\!\!\! $L_9^{+}$\!\!\!
&\!\!\! $L_{10}^{-}$\!\!
\\
\hline
1 & 1 & 1 & 1 & 0 & 1 & 1 & 1 & 1 & 0 & 0 \\
4 & 3 & 3 & 0 & 0 & 0 & 2 & 0 & 0 & 0 & 0 \\
5 & 3 & 3 & 3 & 1 & 0 & 1 & 0 & 0 & 3 & 3 \\
8 & 3 & 3 & 0 & 0 & 0 & 0 & 0 & 0 & 0 & 0 \\
9 & 3 & 3 & 3 & 0 & 3 & 3 & 3 & 3 & 0 & 0 \\
12 & 3 & 3 & 0 & 0 & 0 & 0 & 0 & 0 & 0 & 0 \\
13 & 3 & 3 & 3 & 1 & 0 & 1 & 0 & 0 & 3 & 3 \\
16 & 4 & 4 & 1 & 1 & 1 & 3 & 1 & 1 & 1 & 1 \\
17 & 3 & 3 & 3 & 0 & 3 & 3 & 3 & 3 & 0 & 0 \\
20 & 3 & 3 & 0 & 0 & 0 & 0 & 0 & 0 & 0 & 0 \\
21 & 3 & 3 & 3 & 1 & 0 & 1 & 0 & 0 & 3 & 3 \\
24 & 3 & 3 & 0 & 0 & 0 & 0 & 0 & 0 & 0 & 0 \\
25 & 3 & 3 & 3 & 0 & 3 & 3 & 3 & 3 & 0 & 0 \\
28 & 3 & 3 & 0 & 0 & 0 & 0 & 0 & 0 & 0 & 0 \\
29 & 3 & 3 & 3 & 1 & 0 & 1 & 0 & 0 & 3 & 3 \\
32 & 6 & 6 & 3 & 0 & 0 & 3 & 0 & 0 & 0 & 0 \\
33 & 3 & 3 & 3 & 0 & 3 & 3 & 3 & 3 & 0 & 0 \\
36 & 9 & 9 & 0 & 0 & 0 & 6 & 0 & 0 & 0 & 0 \\
37 & 3 & 3 & 3 & 3 & 0 & 3 & 0 & 0 & 3 & 3 \\
40 & 3 & 3 & 0 & 0 & 0 & 0 & 0 & 0 & 0 & 0 \\
41 & 3 & 3 & 3 & 0 & 3 & 3 & 3 & 3 & 0 & 0 \\
44 & 3 & 3 & 0 & 0 & 0 & 0 & 0 & 0 & 0 & 0 \\
45 & 3 & 3 & 3 & 1 & 0 & 1 & 0 & 0 & 3 & 3 \\
48 & 6 & 6 & 3 & 0 & 0 & 3 & 0 & 0 & 0 & 0 \\
49 & 5 & 5 & 3 & 0 & 3 & 5 & 5 & 5 & 0 & 0 \\
\hline
\end{tabular}
\end{center}


\end{document}